\DeclareFontFamily{U}{mathx}{\hyphenchar\font45}
\DeclareFontShape{U}{mathx}{m}{n}{ <5> <6> <7> <8> <9> <10>
   <10.95> <12> <14.4> <17.28> <20.74> <24.88> mathx10 }{}
\DeclareSymbolFont{mathx}{U}{mathx}{m}{n}
\DeclareMathAccent{\widecheck}{0}{mathx}{"71}
\DeclareMathAlphabet{\mymathbb}{U}{bbold}{m}{n}
\theoremstyle{plain}
\newtheorem{theorem}{Theorem}[section]
\newtheorem{prop}[theorem]{Proposition}
\newtheorem{lemma}[theorem]{Lemma}
\newtheorem{coro}[theorem]{Corollary}
\newtheorem{fact}[theorem]{Fact}
\theoremstyle{definition}
\newtheorem{definition}[theorem]{Definition}
\newtheorem{example}[theorem]{Example}
\newtheorem{remark}[theorem]{Remark}
\newcommand{\ts}{\hspace{0.5pt}}
\newcommand{\nts}{\hspace{-0.5pt}}
\newcommand{\CC}{\mathbb{C}\ts}
\newcommand{\EE}{\mathbb{E}\ts}
\newcommand{\RR}{\mathbb{R}\ts}
\newcommand{\ZZ}{{\ts \mathbb{Z}}}
\newcommand{\QQ}{{\ts \mathbb{Q}}}
\newcommand{\SSS}{\mathbb{S}}
\newcommand{\NN}{\mathbb{N}}
\newcommand{\cE}{\mathcal{E}}
\newcommand{\cF}{\mathcal{F}\ts}
\newcommand{\cH}{\mathcal{H}}
\newcommand{\cL}{\mathcal{L}}
\newcommand{\cO}{\mathcal{O}}
\newcommand{\cS}{\mathcal{S}}
\newcommand{\cY}{\mathcal{Y}}
\newcommand{\cZ}{\mathcal{Z}}
\newcommand{\cM}{\mathcal{M}}
\newcommand{\vG}{\varGamma}
\newcommand{\vL}{\varLambda}
\newcommand{\ii}{\mathrm{i}\ts}
\newcommand{\ee}{\mathrm{e}}
\newcommand{\dd}{\, \mathrm{d}}
\newcommand{\id}{\mathrm{id}}
\newcommand{\one}{\mymathbb{1}}
\newcommand{\nix}{\mymathbb{0}}
\newcommand{\exend}{\hfill $\Diamond$}
\newcommand{\defeq}{\mathrel{\mathop:}=}
\newcommand{\eqdef}{=\mathrel{\mathop:}}
\DeclareMathOperator{\dens}{dens}
\DeclareMathOperator{\Mat}{Mat}
\DeclareMathOperator{\supp}{supp}
\newcommand{\Cc}{C_{\mathsf{c}}}
\newcommand{\Cz}{C^{}_{0}}
\newcommand{\myfrac}[2]{\frac{\raisebox{-2pt}{$#1$}}
      {\raisebox{0.5pt}{$#2$}}}
\begin{document}

\title{On eigenmeasures under Fourier transform}

\author{Michael Baake}
\address{Fakult\"at f\"ur Mathematik,
         Universit\"at Bielefeld, \newline
\hspace*{\parindent}Postfach 100131, 33501 Bielefeld, Germany}
\email{$\{$mbaake,tspindel$\}$@math.uni-bielefeld.de }

\author{Timo Spindeler}

\author{Nicolae Strungaru}
\address{Department of Mathematical Sciences,
         MacEwan University, \newline
\hspace*{\parindent}10700 \ts 104 Avenue,
         Edmonton, AB, Canada T5J 4S2}
\email{strungarun@macewan.ca}

\keywords{Fourier eigenmeasures, Poisson summation formula,
   Quasicrystals}

\subjclass[2010]{42B10,52C23}

\begin{abstract}
  Several classes of tempered measures are characterised that are
  eigenmeasures of the Fourier transform, the latter viewed as a
  linear operator on (generally unbounded) Radon measures on
  $\RR^d$. In particular, we classify all periodic eigenmeasures on
  $\RR$, which gives an interesting connection with the discrete
  Fourier transform and its eigenvectors, as well as all eigenmeasures
  on $\RR$ with uniformly discrete support. An interesting subclass of
  the latter emerges from the classic cut and project method for
  aperiodic Meyer sets. Finally, we construct a large class of
  eigenmeasures with locally finite support that is not uniformly
  discrete and has large gaps around $0$.
\end{abstract}

\maketitle

\section{Introduction}

It is a well-known fact that the Fourier transform on $\RR$, more
precisely its extension to the Fourier--Plancherel transform on the
Hilbert space $\cH=L^2 (\RR)$, is a unitary operator of order~$4$,
with eigenvalues $\{ 1, \ii, -1, -\ii \}$.  A complete basis of
eigenfunctions in $\cH$ can be given in terms of the classic Hermite
functions \cite[Thm.~57]{T}. In physics, these functions are well
known as the eigenfunctions of the classic harmonic oscillator in
quantum mechanics, which is a self-adjoint operator on $\cH$ that
commutes with the Fourier transform; see \cite[Sec.~34]{Dirac} as well
as \cite{DK}, and Section~\ref{sec:prelim} for details and our
conventions.

The corresponding statement for $L^2 (\RR^d)$ can easily be derived
from here, and is helpful in many applications of the Fourier
transform \cite{Wiener, DK}, both in mathematics and in physics. More
generally, eigenfunctions of Fourier cosine and sine transforms as
well as Hankel, Mellin and various other integral transforms have been
studied \cite[Ch.~IX]{T}, in the setting of different function
spaces. Clearly, these results can be reformulated in the realm of
\emph{finite}, absolutely continuous measures, and can be
extended by taking limits in suitable topologies. This point of view
obviously harvests the self-duality of the locally compact Abelian
group $\RR^d$, while the important analogues for mutually dual pairs
of groups, such as the integer lattice $\ZZ^d$ and the corresponding
torus $\mathbb{T}^d$, lead to the theory of Fourier series and its
applications \cite{Rudin,DK,Katz}, as well as to the discrete Fourier
transform.

There is one famous and well-known extension to measures as
follows. If $\delta_x$ denotes the normalised Dirac (or point)
measure at $x$, the lattice Dirac comb
$\delta^{}_{\ZZ^d} \defeq \sum_{m\in\ZZ^d} \delta^{}_{m}$ satisfies
\begin{equation}\label{eq:PSF-1}
  \widehat{\delta^{}_{\ZZ^d}} \, = \, \delta^{}_{\ZZ^d} \ts ,
\end{equation}
where $\widehat{\cdot}$ denotes Fourier transform.
{This formula means that
\begin{equation}\label{eq:PSF-s}
  \sum^{}_{x \in \ZZ^d} f(x)  \, =
  \sum^{}_{y \in \ZZ^d} \widehat{f}(y)
\end{equation}
holds for all Schwartz functions $f$, which} is nothing but the
Poisson summation formula (PSF) for $\ZZ^d$, written in terms of a
lattice Dirac comb; see \cite{Cordoba,BM} as well as
\cite[Sec.~9.2]{TAO1} and references therein. {In fact,
  \eqref{eq:PSF-s} holds for the Feichtinger algebra $S_0 (\RR^d)$
  \cite[Thm.~4.2]{Fei06}, which gives a stronger version of the PSF
  because $S_0 (\RR^d)$ contains all Schwarz functions
  \cite[Thm.~9]{Fei81}. The PSF says that} $\delta^{}_{\ZZ^d}$ is an
eigenmeasure of the Fourier transform with eigenvalue $1$.  More
precisely, in contrast to the function case mentioned above, it is an
example of an \emph{unbounded}, but still translation-bounded, Radon
eigenmeasure with uniformly discrete support.

In mathematics, the PSF (for $d=1$) is related with the functional
equation of Riemann's zeta function, via a Mellin transform, while its
relevance in physics comes from its role in the understanding of the
diffraction theory of crystals. By the latter, we mean
lattice-periodic arrangements of a fundamental motif, which shows a
pure point (or pure Bragg) diffraction spectrum in scattering
experiments with X-rays or other particle beams. The spectrum can then
be understood, both qualitatively and quantitatively, via the Fourier
transform of lattice-periodic measures of the form
$\varrho * \delta^{}_{\nts\vG}$ with a finite motif $\varrho$ and a
general lattice $\vG$ in $\RR^d$; see \cite[Sec.~9.2]{TAO1} for a
detailed exposition. The crucial point here is that the lattice can be
extracted from the support of the spectrum, while details of the
motif need the solution of the difficult inverse problem of
crystallographic structure analysis \cite{Cow}.

More generally, and perhaps more importantly as well, the PSF also
underlies the pure point nature of the diffraction spectra of perfect
quasicrystals \cite{Shecht,Hof,Jeff-rev,TAO1}, which is an important
branch in the theory of aperiodic order. This property emerges from
the description of such quasicrystals as the partial projection of a
higher-dimensional lattice (or its generalisation in the setting of
locally compact Abelian groups \cite{Meyer,Moo00}), where the
embedding lattice and its PSF drives the spectral nature. While the
original approach did not use the PSF, it was suspected to be the key
ingredient \cite{Jeff-rev}, and later established as such
\cite{TAO1,RS}. Crystals and quasicrystals together form an
interesting class of structures within the larger universe of almost
periodic measures; see \cite{MoSt} for a recent survey. Moreover, one
can embed crystals and quasicrystals into the larger class of
modulated (quasi-)crystals, whose spectral structure is still
determined by an underlying PSF. A unified treatment of this entire
class has recently been achieved in \cite{many}.

In this context, it is thus a natural question whether other unbounded
eigenmeasures of the Fourier transform exist, which eigenvalues occur,
and what they might mean. An interesting example, based on work by
Guinand \cite{Gui}, is discussed in \cite{Meyer}. In our terminology,
it is a {tempered} eigenmeasure with eigenvalue $\ii$, but the measure
does not have a uniformly discrete support.  This is a {particularly
  striking} example of what is now known as a `crystalline' measure
\cite{Meyer,LO3,Kol}, and first results indicate that the underlying
measure class deserves further study. In particular, it has an
interesting overlap with the class of mathematical quasicrystals,
which includes the measure $\delta^{}_{\ZZ^d}$ mentioned earlier. This
observation, in conjunction with the general questions put forward in
\cite{Jeff-rev} and the potential relevance to quasicrytals, inspired
us to take a systematic look at translation-bounded eigenmeasures, in
particular with uniformly discrete support, though we also look
beyond. The connection between the diffraction and the dynamical
spectrum for translation bounded measures, see \cite{BL,DM} and
references therein, has potential implications of our findings for the
spectral theory of dynamical systems, but we leave this to future
work. The paper is organised as follows.  \smallskip

After some preliminaries in Section~\ref{sec:prelim}, we determine the
possible eigenvalues and some first solutions, in the setting of Radon
measures, in Section~\ref{sec:exists}. Here, we also derive a simple
characterisation of eigenmeasures in terms of $4$-cycles under the
Fourier transform (Theorem~\ref{thm:decomp-eigenspaces}).  While this
is classic material, and can thus be viewed as a condensed summary, we
are paying more attention to the difference between distributions and
measures, which is necessary outside the class of positive measures
\cite{BS}.  Then, for $d=1$, we consider lattice-periodic solutions
(Section~\ref{sec:cryst}) more closely, which are related with
eigenvectors of the \emph{discrete Fourier transform} (DFT), that is,
the Fourier transform on the finite cyclic group $C_n$, which is also
self-dual. In this periodic case, all eigenmeasures are automatically
pure point measures that are supported in a lattice
(Theorem~\ref{thm:lattice}).

Next, in Section~\ref{sec:shadows}, we consider solutions that
implicitly emerge from the cut and project method of aperiodic order
(Theorem~\ref{thm:aper-meas} and Corollary~\ref{coro:aper}).  Then, we
characterise the general class of eigenmeasures with uniformly
discrete support in Section~\ref{sec:discrete}, leading to
Theorem~\ref{thm:main}, which builds on recent results from
\cite{LO1,LO2}.  Finally, we construct lattice-based eigenmeasures
with a large gap around $0$, which is possible via the connection with
the DFT. The crucial point here is that this construction, via
suitable linear combinations, paves the way to Fourier eigenmeasures
with locally finite, but no longer uniformly discrete, support
(Theorem~\ref{thm:Meyer1} and Corollary~\ref{coro:ev-gap}).  We close
with a brief outlook, and provide an Appendix with a streamlined and
tailored approach to the DFT.

\section{Preliminaries}\label{sec:prelim}

The \emph{Fourier transform} of a function $g\in L^{1} (\RR)$ is
defined by
\[
    \widehat{g} \ts (y) \, = \int_{\RR} \ee^{-2 \pi \ii x y}
    \ts g(x) \dd x \ts ,
\]
which is bounded and continuous, while the inverse transform is given
by $\widecheck{g} (y) = \widehat{g} (-y)$. If
$\widecheck{g} \in L^{1} (\RR)$, one has
$\widehat{\nts\widecheck{g}\ts} = g$.  For the (complex) Hilbert space
$\cH = L^{2} (\RR)$ with inner product
\[
    \langle \ts g \ts | f \ts \rangle  \, \defeq \,
    \int_{\RR} \overline{g (x)} \ts f (x) \dd x
\]
and norm $\| f \|^{}_{2} \defeq \sqrt{\langle f \ts | f \rangle}$, the
Fourier transform on $L^{2}(\RR) \cap L^{1} (\RR)$ has a unique
extension to a unitary operator $\cF$ on $L^{2} (\RR) = \cH$, which is
often called the \emph{Fourier--Plancherel transform}; see
\cite[Thm.~2.5]{BF}. The corresponding statements hold for $\RR^d$ as
well. $\cF$ satisfies the relation $\cF^2 = I$, where $I$ is the
involution defined by $\bigl( I g \bigr) (x) = g (-x)$.  Consequently,
one also has $\cF^4 = \id$, which implies that any eigenvalue of $\cF$
must be a fourth root of unity.

It is well known that $\cH$ possesses an ON-basis of eigenfunctions,
which can be given via the Hermite functions.  They naturally also
appear as the eigenfunctions of the harmonic oscillator in quantum
mechanics, which is a self-adjoint operator that acts on $\cH$ and
commutes with $\cF$; compare \cite[Sec.~34]{Dirac} and \cite{DK}.
Concretely, for $n\in \NN_0$, one can employ the Hermite polynomials
$H_n$ defined by the recursion
\[
    H_{n+1} (x) \, = \, 2 \ts x \ts H_n (x) - 2 \ts n \ts H_{n-1} (x)
\]
for $n\in\NN$ together with $H^{}_{0} (x) = 1$ and
$H^{}_{1} (x) = 2 x$.  Then, the (normalised) Hermite functions
\[
  h_{n} (x) \, \defeq \, \biggl( \frac{\sqrt{2}}{2^n n!}
  \biggr)^{\! 1/2}
    H_n \bigl( \sqrt{2 \pi} \ts x \bigr) \, \ee^{- \pi x^2}
\]
satisfy the orthonormality relations
$\langle h_m \ts | \ts h_n \rangle = \delta_{m,n}$ for
$m,n \in \NN_0$.

Moreover, for $n\geqslant 0$, one has
\[
  \cF (h^{}_{n}) \, = \,
  \widehat{h^{}_{n}} \, = \, (-\ii)^n h^{}_{n} \ts ,
\]
which means that the Hermite functions are eigenfunctions of $\cF$;
see \cite[Rem.~8.3]{TAO1} as well as \cite[\S 6]{Wiener} or
\cite[Sec.~2.5]{DK}.  In Dirac's intuitive bra-c-ket notation
\cite[Sec.~14]{Dirac}, one thus obtains the spectral theorem for the
unitary operator $\cF$ as
\[
    \cF \, = \sum_{n=0}^{\infty} | \ts h_n \ts \rangle (-\ii)^n
    \langle \ts h_n \ts | \ts ,
\]
which also entails the relation
$\cH = \cH^{}_{0} \oplus \cH^{}_{1} \oplus \cH^{}_{2} \oplus
\cH^{}_{3}$, with $\cH^{}_{\ell} = \text{span}^{}_{\CC}
\{ h^{}_{4m+\ell} : m\in \NN_{0} \}$.

Next, let $xy$ denote the inner product of $x,y\in\RR^d$ and let $\mu$
be a finite measure on $\RR^d$. Then, its Fourier transform
\cite{Rudin} is the continuous function on $\RR^d$ given by
\[
   \widehat{\mu} (y) \, \defeq \int_{\RR^d}
   \ee^{-2 \pi \ii x y} \dd \mu (x) \ts .
\]
As the Fourier transform of a finite measure is a continuous function,
nothing much of interest happens beyond what we summarised
above. All this can be understood by taking limits of absolutely
continuous measures (on the basis of sums of Hermite functions
as Radon--Nikodym densities) in suitable topologies.
Consequently, we proceed with the extension to \emph{unbounded}
measures, where the very notion of Fourier transformability is more
delicate. The simplest approach works via distribution theory, which
we shall adopt here. From now on, unless specified otherwise, the term
\emph{measure} will always mean a Radon measure, the latter viewed
as a linear functional on $\Cc (\RR^d)$.

Since $\Cc^{\infty} (\RR^d) \subset \Cc (\RR^d)$, every measure defines
a distribution. Now, a measure on $\RR^d$ is \emph{tempered}
if this distribution is tempered, that is, a continuous linear
functional on Schwartz space, $\cS (\RR^d)$.
A tempered measure $\mu$ is called \emph{Fourier
  transformable}, or \emph{transformable} for short, if it is
transformable as a distribution and if the transform is again a
measure. The distributional transform of $\mu$ will interchangeably be
denoted by $\cF (\mu)$ and by $\widehat{\mu}$. When the measure $\mu$
is transformable in this sense, its transform $\nu = \widehat{\mu}$ is
a tempered measure, and hence transformable as a distribution, with
$\widehat{\nu} = \cF^2 (\mu) = I \nts . \ts \mu$, where the latter
means the push-forward of the space inversion defined by $I (x) = -x$,
to be discussed in more detail later. As a consequence,
$\widehat{\nu}$ is again a measure, and we have the following useful
property.

\begin{fact}\label{fact:forever}
  If a tempered measure is transformable, it is also multiply
  transformable. \qed
\end{fact}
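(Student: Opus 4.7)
My plan is to leverage the fact that, on the distributional level, the Fourier transform is a bijection of order~$4$ (up to the space inversion $I$), as already recorded via $\cF^2 = I$ on $\cS(\RR^d)$. So, once we know that a single distributional transform of $\mu$ is again a measure, the remaining transforms are automatically measures too, because they agree (as distributions) with measures we already have in hand.

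Concretely, let $\mu$ be a transformable tempered measure and set $\nu \defeq \widehat{\mu}$. By assumption $\nu$ is a measure, and since $\widehat{\mu}$ is the distributional transform of a tempered distribution, $\nu$ is again a tempered distribution, hence a tempered measure. Its distributional transform is $\widehat{\nu} = \cF^2(\mu) = I \nts .\ts \mu$, and the pushforward $I\nts .\ts\mu$ of the measure $\mu$ under the homeomorphism $x \mapsto -x$ is obviously a (tempered) measure. This already shows that $\nu = \widehat{\mu}$ is transformable.

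I would then iterate one more step: $I\nts .\ts \mu$ is a tempered measure, and its distributional transform is $\cF^{3}(\mu) = I\nts .\ts \cF(\mu) = I\nts .\ts \nu$, again a measure by the same pushforward argument. One final transform gives $\cF^{4}(\mu) = \mu$ by the identity $\cF^4 = \id$ on $\cS'(\RR^d)$, so the cycle closes. By induction on $n$, the $n$-fold distributional transform $\cF^{n}(\mu)$ equals one of the four measures $\mu,\, \nu,\, I\nts .\ts\mu,\, I\nts .\ts\nu$, and in particular is always a measure.

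I do not expect any real obstacle here: the only things to check are that the distributional identities $\cF^2 = I$ and $\cF^4 = \id$ are valid on the tempered distribution defined by $\mu$ (they are, since they hold on all of $\cS'$), and that pushing a Radon measure forward under the linear homeomorphism $I$ preserves both the Radon and the tempered properties (which is immediate from $\varphi \mapsto \varphi \circ I$ being a continuous automorphism of both $\Cc(\RR^d)$ and $\cS(\RR^d)$). The argument is essentially a one-line bookkeeping statement once these are in place.
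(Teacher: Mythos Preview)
Your argument is correct and is essentially the same as the paper's: the paper records, in the paragraph immediately preceding the Fact, that $\widehat{\nu}=\cF^{2}(\mu)=I\nts .\ts\mu$ is again a measure, and then simply writes \qed. You have merely made explicit the remaining steps of the $4$-cycle $\mu,\nu,I\nts .\ts\mu,I\nts .\ts\nu$, which the paper leaves implicit.
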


Since $\RR^d$ is a special case of a locally compact Abelian group,
let us also mention the classic approach to the transformability of a
measure, which completely avoids the use of distribution theory, and
is not restricted to tempered measures.

\begin{definition}
  A Radon measure $\mu$ is called \emph{transformable in the strict
    sense}, or \emph{strictly transformable} for short, if it is
  transformable as a measure, in the sense of \cite{ARMA1,BF}.  This
  means that there exists some measure $\widehat{\mu}$ on $\RR^d$ with
  the property that, for all $\varphi \in \Cc(\RR^d)$ with
  $|\widecheck{\varphi}\ts | \in L^1(\RR^d)$, we have
  $ \mu(\varphi)\,=\,\widehat{\mu}( \widecheck{\varphi})$.
\end{definition}

We refer to \cite{MoSt,NS11} for the systematic exposition of recent
developments and results.  Note that a transformable, tempered measure
need not be strictly transformable. By slight abuse of notation, we
shall use $\cF$ for both versions, as the context will always be
clear.  As the domain of two successive applications of $\cF$ in the
strict sense is now generally smaller than that of $\cF$, one also
needs the notion of double transformability here. A measure $\mu$ is
called \emph{doubly} (or \emph{twice}) transformable in the strict
sense if $\mu$ is strictly transformable as a measure and its
transform, $\cF(\mu)$, is again transformable in the strict sense.

Recall that a measure $\mu$ on $\RR^d$ is called \emph{translation
  bounded} if $\sup_{t\in\RR^d} \lvert \mu \rvert (t+K) < \infty$
holds for some (and then any) compact set $K\subset\RR^d$ with
non-empty interior, where $\lvert \mu\rvert$ is the total variation of
$\mu$. This notion is crucial in many ways. In particular, every
translation-bounded measure is tempered \cite[Thm.~7.1]{ARMA1}.
Further, strict transformability of tempered measures can
most easily be decided via an application of \cite[Thm.~5.2]{Nicu} as
follows; see also \cite{ST}.

\begin{fact}\label{fact:transformable}
  A tempered measure\/ $\mu$ on\/ $\RR^d$ is strictly transformable as
  a measure if and only if it is transformable as a tempered
  distribution such that\/ $\widehat{\mu}$ is a translation-bounded
  measure.  Moreover, the two transforms agree in this case.  \qed
\end{fact}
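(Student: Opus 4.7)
The plan is to prove each implication separately, leaning on the reconciliation between the strict (Argabright--de~Lamadrid) notion of Fourier transformability and the distributional one on Schwartz space, which is the content developed in \cite{Nicu,ST}.

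For the forward direction, assume that $\mu$ is tempered and strictly transformable. A classical structural result (compare \cite[Sec.~4]{ARMA1} and the exposition in \cite{MoSt}) guarantees that the strict Fourier transform $\widehat{\mu}$ is automatically a translation-bounded measure, and hence tempered by \cite[Thm.~7.1]{ARMA1}. To identify $\widehat{\mu}$ with the distributional transform, I would start from the defining Parseval-type relation for the strict transform, which holds on the convolution algebra $\mathcal{K}^{2} (\RR^d) = \mathrm{span}\bigl\{ g * \widetilde{g} \ts : \ts g \in \Cc(\RR^d)\bigr\}$. Since the Fourier images of $\mathcal{K}^{2}(\RR^d)$ are dense in $\cS(\RR^d)$ in the appropriate topology, the Parseval relation extends, by continuity of both sides in the Schwartz topology (which uses the tempered nature of $\mu$ and the translation-boundedness of $\widehat{\mu}$ just established), to $\int \widehat{\varphi} \dd \mu = \int \varphi \dd \widehat{\mu}$ for every $\varphi \in \cS(\RR^d)$. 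This is precisely the identity characterising the distributional transform, so the two transforms agree.

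For the converse, assume that $\mu$ is a tempered measure whose distributional transform $\nu$ is a translation-bounded measure. Then $\nu$ is itself tempered by \cite[Thm.~7.1]{ARMA1}, so the pair $(\mu,\nu)$ satisfies $\int \widehat{\varphi} \dd \mu = \int \varphi \dd \nu$ for all $\varphi \in \cS(\RR^d)$. This is precisely the hypothesis of \cite[Thm.~5.2]{Nicu}, which concludes that $\mu$ is strictly Fourier transformable with $\widehat{\mu} = \nu$; coincidence of the two transforms is then automatic from this identification.

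The main obstacle I anticipate lies in the forward direction: both the automatic translation-boundedness of the strict transform and the extension of the Parseval identity from $\mathcal{K}^{2}(\RR^d)$ to all of $\cS(\RR^d)$ hinge on careful handling of the test-function topologies, in particular on checking that the Schwartz-topology limits commute with integration against the (possibly unbounded) measures $\mu$ and $\widehat{\mu}$. The converse, by contrast, is essentially an invocation of the cited theorem of Nicu, which is tailored precisely to upgrade a distributional-transform identity between tempered measures to the strict transformability of $\mu$.
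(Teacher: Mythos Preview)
The paper does not actually prove this statement: it is presented as a Fact with a \qed, preceded only by the remark that it follows from \cite[Thm.~5.2]{Nicu} (with a pointer to \cite{ST}). Your proposal is correct and is precisely an unpacking of that citation --- the converse is a direct invocation of \cite[Thm.~5.2]{Nicu}, and your forward direction (automatic translation-boundedness of the strict transform, then extension of the Parseval identity from $\mathcal{K}^{2}(\RR^d)$ to $\cS(\RR^d)$) is the standard argument behind the compatibility results in \cite{Nicu,ST}.
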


One important consequence of Fact~\ref{fact:transformable} is that a
strictly transformable eigenmeasure is automatically translation
bounded. However, this type of approach via distributions is only
available over $\RR^d$.  A characterisation of transformable measures
on locally compact Abelian groups can be found (without proof) in
\cite[Thm. C1]{Fei}.  For some subtleties around tempered measures,
once one goes beyond the class of positive measures, we also refer to
\cite{BS}.

\begin{remark}
  Recall that a measure $\mu$ is \emph{slowly increasing} if
\[
    \int_{\RR^d} \frac{\dd | \mu | \ts (x) }{1+|P(x)|} \, < \, \infty \ts
\]
holds for some polynomial $P$.
By \cite[p.~242]{Schw}, a positive measure is tempered if and only if
it is slowly increasing. A slowly increasing measure is always tempered,
but the converse is generally not true; compare \cite[Prop.~7.1]{ARMA1}.

ome authors \cite{Kab} call a Radon measure $\mu$ \emph{tempered} when
\[
   \int_{\RR^d} |f(x)| \dd |\mu|(x) \, < \, \infty
\]
holds for all $f \in \cS(\RR^d)$ and when
$f \mapsto \int_{\RR^d} f(x) \dd \mu(x)$ defines a tempered
distribution. Note that a measure is tempered in this sense if and
only if it is slowly increasing \cite[Thm.~2.7]{BS}.  The somewhat
subtle but relevant issue with this definition is that the example
from \cite{ARMA1} mentioned above provides an example of a Radon
measure $\mu$ that is not slowly increasing, but whose restriction to
$\Cc^\infty(\RR^d)$ can be extended to a tempered distribution.  Let
us note in passing that, for this measure, there exists
$f \in \cS(\RR^d)$ such that
\[
   \int_{\RR^d} |f(x)| \dd |\mu|(x) \, = \, \infty \ts ;
\]
see \cite{BS} for details.
\exend
\end{remark}

By \cite[Thm.~3.12]{RS}, a strictly transformable measure $\mu$ on
$\RR^d$ is doubly transformable in the strict sense if and only if
$\mu$ is translation bounded. In this case, by
\cite[Thm.~4.9.28]{MoSt}, one actually gets
$\cF^2 (\mu) = I \nts . \ts \mu$ as before.

\begin{definition}\label{def:feigen}
  A non-trivial tempered measure\/ $\mu$ on\/ $\RR^d$ is called an
  \emph{eigenmeasure} of\/ $\cF$, or an eigenmeasure under Fourier
  transform, if it is transformable together with\/
  $\cF (\mu) = \lambda \mu$ for some\/ $\lambda \in \CC$, which is the
  corresponding \emph{eigenvalue}.

  Further, we call an eigenmeasure\/ $\mu$ \emph{strict} when it is
  transformable in the strict sense and satisfies\/
  $\cF (\mu) = \lambda \mu$ in the sense of measures.
\end{definition}

When the support of a measure $\mu$ has special discreteness
properties, one can profitably employ the following equivalence.

\begin{lemma}\label{lem:tb}
  If\/ $\mu\ne 0$ is a tempered, pure point measure on\/ $\RR^d$ with
  uniformly discrete support, the following properties are equivalent.
\begin{enumerate}\itemsep=2pt
\item The measure\/ $\mu$ is strictly transformable, and\/
  $\widehat{\mu} = \lambda \mu$ holds as an equation of measures for
  some\/ $0\ne\lambda\in \CC$.
\item The measure\/ $\mu$ satisfies\/ $\widehat{\mu} = \lambda \mu$ in
  the distributional sense for some\/ $0\ne\lambda \in \CC$.
\end{enumerate}
\end{lemma}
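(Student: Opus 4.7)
The implication (1) $\Rightarrow$ (2) is immediate: equality of measures descends to equality of tempered distributions under the canonical embedding. For (2) $\Rightarrow$ (1), the plan is to invoke Fact~\ref{fact:transformable}. The distributional identity already places $\widehat{\mu}$ among the measures (since $\lambda\mu$ is one), so only translation boundedness of $\widehat{\mu}$ remains to be checked. Because $\widehat\mu = \lambda\mu$ with $\lambda \neq 0$, this reduces to translation boundedness of $\mu$ itself; and writing $\mu = \sum_{x \in S} c_x \delta_x$ with $S$ uniformly discrete, it reduces further to the uniform bound $\sup_{x \in S}|c_x| < \infty$.

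The first step is to extract polynomial growth of $|c_x|$ from temperedness. Fix $\phi_0 \in \Cc^\infty(\RR^d)$ with $\phi_0(0) = 1$ and support inside a ball of radius smaller than the packing radius of $S$, and set $\phi_{x_0}(y) \defeq \phi_0(y - x_0)$. Then $\mu(\phi_{x_0}) = c_{x_0}$, because only the point $x_0 \in S$ contributes, and the Schwartz seminorm estimate for the tempered distribution $\mu$ yields $|c_{x_0}| \leqslant C (1 + |x_0|)^N$ for constants $C, N$ depending only on $\mu$ and $\phi_0$.

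The second step feeds the eigenvalue equation back in. Testing $\widehat\mu = \lambda\mu$ against $\phi_{x_0}$ and using $\widehat{\phi_{x_0}}(\xi) = \ee^{-2\pi\ii x_0 \xi}\widehat{\phi_0}(\xi)$ gives
\[
   c_{x_0} \, = \, \lambda^{-1}\mu\bigl(\widehat{\phi_{x_0}}\bigr)
   \, = \, \lambda^{-1}\sum_{x \in S} c_x\,\ee^{-2\pi\ii x_0 x}\,\widehat{\phi_0}(x) .
\]
Since $\widehat{\phi_0} \in \cS(\RR^d)$ decays faster than any polynomial while $|c_x|$ grows only polynomially and $S$ has at most polynomial counting in balls of large radius, the sum converges absolutely; taking absolute values eliminates the $x_0$-dependence and produces the uniform bound $|c_{x_0}| \leqslant |\lambda|^{-1}\sum_{x \in S}|c_x|\,|\widehat{\phi_0}(x)| < \infty$. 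Hence $\sup_{x \in S}|c_x| < \infty$, so $\mu$, and with it $\widehat{\mu}$, is translation bounded. Fact~\ref{fact:transformable} then supplies strict transformability and upgrades the distributional identity to an equation of measures.

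The main hurdle is precisely this bootstrapping from polynomial growth (which is all that temperedness yields by itself) to uniform boundedness of the coefficients. The eigenvalue relation is what converts the Schwartz decay of $\widehat{\phi_0}$ into a pointwise bound on the $c_x$; without it, a tempered pure point measure with uniformly discrete support need not be translation bounded, so the hypothesis $\widehat{\mu} = \lambda\mu$ is used in an essential way.
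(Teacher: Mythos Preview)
Your proof is correct. For $(2) \Rightarrow (1)$, the paper simply cites \cite[Lemma~6.3]{BST} to obtain translation boundedness of $\widehat{\mu}$ and then applies Fact~\ref{fact:transformable}. You instead give a self-contained argument for translation boundedness of $\mu$ (equivalently of $\widehat\mu = \lambda\mu$): first extracting polynomial growth of the coefficients $c_x$ from temperedness via translates of a fixed bump, then bootstrapping to a uniform bound by testing the eigenvalue relation against those same bumps and using the Schwartz decay of $\widehat{\phi_0}$ together with the polynomial counting of the uniformly discrete support. Your route is longer but elementary, and it makes explicit exactly where the hypothesis $\widehat\mu = \lambda\mu$ is essential --- namely, to upgrade polynomial growth of the coefficients to uniform boundedness --- whereas the paper outsources this step to the cited lemma.
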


\begin{proof}
  Let us first note that $\lambda = 0$ implies $\mu = 0$, so we must
  indeed have $\lambda \ne 0$.

  To verify (1) $\Rightarrow$ (2), we simply observe that strict
  transformability of $\mu$ implies its (distributional)
  transformability by Fact~\ref{fact:transformable}, and the two
  transforms agree.

  Conversely, \cite[Lemma~6.3]{BST} implies that $\widehat{\mu}$ is
  translation bounded, and the claim follows again from
  Fact~\ref{fact:transformable}.
\end{proof}

These characterisations will cover essentially all situations we
encounter below. Let us first show that the eigenmeasure notion is a
consistent one, and determine the possible eigenvalues.

\section{Eigenvalues and structure of solutions}\label{sec:exists}

Recall that $\cS (\RR^d)$ denotes Schwartz space \cite{Schw}, and that
$I$ is the reflection in $0$, hence \mbox{$I(x) = -x$}. As above, we
let $I$ also act on functions, via $I (g) \defeq g \circ
I$. Accordingly, when $\mu$ is a measure, we define the push-forward
$I \nts . \ts \mu$ by
$\bigl( I \nts . \ts \mu \bigr) (g) = \mu ( g \circ I)$.

Let $\mu\ne 0$ be a tempered measure, so $\mu \in \cS' (\RR^d)$, where
the latter denotes the space of tempered distributions on $\RR^d$. If
$\mu$ is an eigenmeasure of $\cF$, hence $\cF (\mu) = \lambda \mu$, we
get $\cF^4 (\mu) = \lambda^4 \mu$ by repetition and
Fact~\ref{fact:forever}.  On the other hand, given any
$\varphi\in \cS (\RR^d)$ and observing the relation
$\cF^2 (\varphi) = \varphi \circ I$, one has
\[
    \cF^2 (\mu) (\varphi) \, = \, \mu \bigl(
    \cF^2 (\varphi) \bigr) \, = \, \mu ( \varphi \circ I )
    \, = \, \bigl( I \nts . \ts \mu \bigr) (\varphi) \ts ,
\]
hence $\cF^2 (\mu) = I\nts . \ts \mu$ and thus $\cF^4 (\mu) = \mu$ due
to $I^2 = \id$. This implies $\lambda^4 = 1$ as in the case of
functions, so $\lambda$ must be a fourth root of unity. Note that we
also get $I\nts . \ts \mu = \cF^2 (\mu) = \lambda^2 \mu$, for any of
the possible eigenvalues. This has an immediate consequence as
follows.

\begin{fact}\label{fact:symm}
  Let\/ $\mu$ be a tempered measure on\/ $\RR^d$ that is an
  eigenmeasure of\/ $\cF$, so\/ $\widehat{\mu} = \lambda \mu \ne
  0$. Then, $\lambda \in \{ 1, \ii, -1, -\ii \}$, and one has\/
  $I \nts . \ts \mu = \lambda^2 \mu$. In particular, the support of\/
  $\mu$ is symmetric, $\supp (\mu) = - \supp (\mu)$, and\/ $\mu$
  satisfies\/
  $\mu \bigl( \{ -x \} \bigr) = \lambda^2 \ts \mu \bigl( \{ x \}
  \bigr)$ for all\/ $x\in\RR^d$.  \qed
\end{fact}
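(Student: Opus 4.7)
The plan is to observe that most of the statement has already been verified in the discussion immediately preceding the Fact. That paragraph uses Fact~\ref{fact:forever} together with the identity $\cF^2(\varphi) = \varphi \circ I$ on $\cS (\RR^d)$ to show $\cF^2 (\mu) = I \nts . \ts \mu$, whence $\cF^4 (\mu) = \mu$ via $I^2 = \id$. This pins $\lambda^4 = 1$, so $\lambda \in \{ 1, \ii, -1, -\ii \}$, and simultaneously yields $I \nts . \ts \mu = \cF^2 (\mu) = \lambda^2 \mu$, settling the first two assertions of the Fact.

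To deduce the remaining two claims, I would unwind the definition of the push-forward. By definition, $(I \nts . \ts \mu)(g) = \mu (g \circ I)$ for every $g \in \Cc (\RR^d)$, which at the level of Borel sets translates to $(I \nts . \ts \mu)(A) = \mu (-A)$, using $I^{-1} = I$. Applying the identity $I \nts . \ts \mu = \lambda^2 \mu$ to an arbitrary singleton $A = \{ x \}$ then gives the pointwise relation $\mu \bigl( \{ -x \} \bigr) = \lambda^2 \ts \mu \bigl( \{ x \} \bigr)$. For the support symmetry, I would use that $\lambda^2 \ne 0$, so $\supp (\mu) = \supp (\lambda^2 \mu)$, while the push-forward under the homeomorphism $I$ satisfies $\supp (I \nts . \ts \mu) = - \supp (\mu)$; comparing the two sides of $I \nts . \ts \mu = \lambda^2 \mu$ at the level of supports yields $\supp (\mu) = - \supp (\mu)$.

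No genuine obstacle arises, since the distributional computation in the preceding paragraph already carries the bulk of the argument. The only point requiring minor care is bookkeeping of the sign convention: the identity $(I \nts . \ts \mu)(A) = \mu (-A)$ must be applied consistently when translating the functional-level equation $I \nts . \ts \mu = \lambda^2 \mu$ into statements about individual atoms and supports.
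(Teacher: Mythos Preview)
Your proposal is correct and follows exactly the paper's approach: the Fact is stated with a \qed\ because the preceding paragraph already establishes $\lambda^4=1$ and $I\nts.\ts\mu=\lambda^2\mu$, and you rightly spell out how the support symmetry and the pointwise atom relation follow by unwinding the push-forward. The only difference is that you make explicit what the paper leaves as an ``immediate consequence''.
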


Let us verify that all $\lambda \in \{ 1, \ii, -1, -\ii \}$ actually
occur. To this end, we use $\cF^4=\id$ and consider measures
$\nu^{}_{m} = \mu + \ii^m \cF (\mu) + \ii^{2m} \cF^2 (\mu) + \ii^{3m}
\cF^3 (\mu)$, for $0\leqslant m \leqslant 3$.  Then, one has
\begin{equation}\label{eq:trick}
      \cF (\nu^{}_{m} ) \, = \, (-\ii)^m \ts \nu^{}_{m} \ts ,
\end{equation}
which establishes the existence of eigenmeasures for all possible
$\lambda \in \{ 1, \ii, -1, -\ii \}$, provided we start from a
transformable, tempered measure $\mu$ such that the $\nu^{}_{m}$ with
$0\leqslant m \leqslant 3$ are non-trivial. On $\RR$, this can be
achieved with $\mu = \delta^{}_{\alpha} \nts * \delta^{}_{\ZZ}$ for
any $\alpha \not\in \QQ$.

Let us continue with a slightly more complex example that will give
additional insight.  To this end, we set $d=1$ and start with an
arbitrary real eigenmeasure for $\lambda=1$, so
$\widehat{\mu} = \mu = \overline{\mu}$, such as
$\mu = \delta^{}_{\ZZ}$ from the PSF in \eqref{eq:PSF-1}. Now, let
$\chi^{}_{s} \! : \, \RR \xrightarrow{\quad} \SSS^1$ be a character,
so $\chi^{}_{s} (x) = \ee^{2 \pi \ii s x}$ for some fixed $s \in
\RR$. Then, for $t\in\RR$, a simple calculation shows that
\begin{equation}\label{eq:char-1}
  \delta^{}_{t} * \bigl( \chi^{}_{s} \ts \delta^{}_{\ZZ} \bigr)
    \, = \, \overline{\chi^{}_{s} (t)} \,
    \chi^{}_{s} \cdot \bigl( \delta^{}_{t} \nts * \delta^{}_{\ZZ} \bigr)
     \, = \, \overline{\chi^{}_{s} (t)} \,
    \bigl( \chi^{}_{s} \ts \delta^{}_{\ZZ + t} \bigr)  .
\end{equation}
This can be done more generally for any real eigenmeasure, with the
following consequence.

\begin{lemma}\label{lem:modif}
  Let\/ $\chi^{}_{s}$ be a character on\/ $\RR$, and let\/ $\mu$ be a
  transformable, tempered measure on\/ $\RR$ that satisfies\/
  $\widehat{\mu} = \mu = \overline{\mu}$. Then, the measure\/
  $\omega^{}_{s} \defeq \chi^{}_{s} \cdot (\delta^{}_{s} \nts * \mu)$
  is a tempered measure that is transformable as well, with
\[
    \widehat{\omega^{}_{s}} \, = \,
    \ee^{2 \pi \ii s^2}\ts \overline{\omega^{}_{s}} \ts .
\]
For $\mu = \omega^{}_{0}$, this relation reduces to\/
$\widehat{\mu} = \mu$.

Moreover, if\/ $\mu$ is transformable in the strict sense, then so is
$\omega^{}_s$, and both\/ $\mu$ and\/ $\omega^{}_{s}$ are
translation-bounded measures.
\end{lemma}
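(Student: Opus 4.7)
The plan is to reduce the claim to three standard manipulations of the Fourier transform on measures, and then use the self-conjugacy and self-duality of $\mu$ to recognise the result.

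First, I would compute $\widehat{\omega^{}_{s}}$ directly by combining two basic rules: the Fourier transform turns convolution by a point mass into multiplication by a character,
\[
  \widehat{\delta^{}_{s} \nts * \mu} \, = \,
  \chi^{}_{-s} \cdot \widehat{\mu} \, = \, \chi^{}_{-s} \cdot \mu \ts ,
\]
using $\widehat{\mu} = \mu$; and it exchanges multiplication by $\chi^{}_{s}$ with convolution by $\delta^{}_{s}$, so that
\[
  \widehat{\omega^{}_{s}} \, = \,
  \widehat{\chi^{}_{s} \cdot (\delta^{}_{s} \nts * \mu)} \, = \,
  \delta^{}_{s} \nts * \bigl( \chi^{}_{-s} \cdot \mu \bigr) .
\]
Both identities hold at the distributional level, and the products and convolutions involved are well defined because one factor is always a (tempered) measure and the other either a point mass or a smooth bounded character.

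Second, I would commute the translation and the character using the pointwise identity $\delta^{}_{t} \nts * (f \cdot \nu) = (f(\cdot - t)) \cdot (\delta^{}_{t} \nts * \nu)$, which is a routine check against test functions. Applied with $t=s$ and $f = \chi^{}_{-s}$, this gives the shifted character $\chi^{}_{-s}(x - s) = \ee^{2 \pi \ii s^2}\ts \chi^{}_{-s}(x)$, so
\[
  \widehat{\omega^{}_{s}} \, = \,
  \ee^{2 \pi \ii s^2}\, \chi^{}_{-s} \cdot
  \bigl( \delta^{}_{s} \nts * \mu \bigr) .
\]
Third, using $\overline{\mu} = \mu$ and the elementary relation $\overline{\chi^{}_{s}} = \chi^{}_{-s}$, one has $\overline{\omega^{}_{s}} = \chi^{}_{-s} \cdot (\delta^{}_{s} \nts * \mu)$, which matches the right-hand side and gives the claimed identity $\widehat{\omega^{}_{s}} = \ee^{2 \pi \ii s^2} \overline{\omega^{}_{s}}$. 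The degenerate case $s=0$ then reads $\widehat{\mu} = \mu$, as stated.

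It remains to justify the translation-boundedness and transformability claims. Since $\widehat{\mu} = \mu$ is a measure by hypothesis, Fact~\ref{fact:transformable} reduces strict transformability of $\mu$ to translation-boundedness of $\widehat{\mu} = \mu$; this in turn makes $\delta^{}_{s} \nts * \mu$ translation-bounded (translation invariance of the defining supremum) and then $\omega^{}_{s} = \chi^{}_{s} \cdot (\delta^{}_{s} \nts * \mu)$ translation-bounded as well, because $\lvert \chi^{}_{s} \rvert \equiv 1$ does not change the total variation. Transformability of $\omega^{}_{s}$ as a tempered distribution is automatic (modulation and translation preserve $\cS'$), and what we have just computed shows that the transform is again a measure, in fact a translation-bounded one. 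The main (minor) obstacle is bookkeeping of the sign in the shifted character, which is what produces the phase $\ee^{2 \pi \ii s^2}$; everything else is a routine application of the standard Fourier calculus for measures.
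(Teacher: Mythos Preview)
Your proposal is correct and follows essentially the same route as the paper: both compute $\widehat{\omega^{}_{s}} = \delta^{}_{s} * (\chi^{}_{-s}\,\mu)$ via the standard modulation/translation rules, then commute the shift past the character to extract the phase $\ee^{2\pi\ii s^2}$ (what the paper phrases as the ``obvious generalisation of \eqref{eq:char-1}''), and finally identify the result with $\overline{\omega^{}_{s}}$ using the reality of $\delta^{}_{s} * \mu$. The only cosmetic difference is the order in which you handle the regularity claims versus the Fourier computation; the paper is equally brief on the translation-boundedness point.
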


\begin{proof}
  Under our assumptions, $\omega^{}_{s}$ clearly is a tempered
  measure, while a simple calculation with the convolution theorem
  shows that the distribution $\widehat{\omega^{}_{s}}$ is indeed a
  measure, so $\omega^{}_{s}$ is transformable by definition.

  With $\mu$, also the shifted measure $\delta^{}_{s} \nts * \mu$ is
  real. Observing $\widecheck{\ts\delta^{}_s} = \chi^{}_s $, we can simply
  calculate
\[
   \cF (\omega^{}_{s}) \, = \, \widehat{\chi^{}_{s}} *
   \bigl( \chi^{}_{-s} \, \widehat{\mu} \bigr) \, = \,
   \delta^{}_{s} \nts * (\overline{\chi^{}_{s}} \, \mu )
   \, = \, \chi^{}_{s} (s)
   \bigl( \overline{\chi^{}_{s}} \cdot (\delta^{}_{s}
   \nts * \mu ) \bigr) \, = \, \ee^{2 \pi \ii s^2} \,
   \overline{\omega^{}_{s}} \ts ,
\]
where the third step is the obvious generalisation of
\eqref{eq:char-1}.  This establishes the main claim, while $s = 0$
clearly gives the eigenmeasure relation stated.

When $\mu$ is strictly transformable, $\mu= \widehat{\mu}$
is translation bounded by Fact~\ref{fact:transformable}. It follows
immediately that $\omega^{}_{s}$ is translation bounded as well.
\end{proof}

With the $\omega^{}_{s}$ from Lemma~\ref{lem:modif}, for generic $s$,
we get the cycle
\begin{equation}\label{eq:cycle}
    \omega^{}_{s} \, \xrightarrow{\,\cF\,} \,
    \ee^{2 \pi \ii s^2} \, \overline{\omega^{}_{s}}
     \, \xrightarrow{\,\cF\,} \,  I \nts\nts . \ts\ts \omega^{}_{s}
      \, \xrightarrow{\,\cF\,} \, \ee^{2 \pi \ii s^2} \,
      I \nts\nts . \ts\ts \overline{\omega^{}_{s}}
       \, \xrightarrow{\,\cF\,} \,  \omega^{}_{s} \ts ,
\end{equation}
as can be checked by an explicit calculation that we leave to the
interested reader. Unless $s$ takes special values, the four measures
are distinct, and thus form a $4$-cycle. Excluding a few more values
for $s$, the measures $\nu^{}_{m}$ from \eqref{eq:trick} with
$\mu = \omega^{}_{s}$ are non-trivial, so we see a plethora of
eigenmeasures of $\cF$, for all possible eigenvalues.

{Next, for each $\lambda \in \{ \pm 1, \pm\ts \ii \}$, we define an
operator
$P^{}_{\lambda} \colon \cS' (\RR^d) \xrightarrow{\quad} \cS' (\RR^d)$
via
\[
  P^{}_{\lambda} (\mu) \, \defeq \, \myfrac{1}{4}
  \sum_{j=0}^{3} \lambda^{-j} \ts \cF^{j} (\mu) \ts .
\]
Its properties can be summarised as follows.

\begin{prop}\label{prop:plambda}
The operators\/ $P^{}_{\lambda}$ with\/ $\lambda \in \{ 1, \ii, -1, -\ii\}$
satisfy the following properties.
\begin{enumerate}\itemsep=2pt
\item One has\/ $P^{}_1+P^{}_{\ts\ii}+P^{}_{-1}+P^{}_{-\ii}= \id$.
\item For any\/ $\lambda, \lambda' \in \{ \pm 1, \pm\ts \ii \}$, one
  has\/ $P^{}_{\lambda}\nts \circ P^{}_{\lambda'} = \delta^{}_{\lambda,\lambda'}
  \ts P^{}_{\lambda}$ and\/
  $\cF \nts \circ P^{}_\lambda = \lambda\ts P^{}_\lambda$.
\item A subspace\/ $X \nts\subseteq\cS' (\RR^d)$ is
  $\cF\nts$-invariant if and only if\/
  $P^{}_{\lambda} (X) \subseteq X$ for all\/
  $\lambda \in \{ \pm 1, \pm \ts \ii \}$.
\item Let\/ $X \subseteq \cS' (\RR^d)$ be\/ $\cF\nts$-invariant and
  let\/ $\lambda \in \{ \pm 1, \pm\ts \ii \}$. Then, the restriction
  of\/ $P^{}_{\lambda}$ to\/ $X$ is a projection operator onto the
  eigenspace
\[
    \EE^{}_{\lambda}(X) \, \defeq \,
    \{ \omega \in X : \cF(\omega) = \lambda \ts \omega \} \ts .
\]
In particular, one has\/ $ X  =  \EE^{}_{1} (X)
  \oplus \EE^{}_{\ts\ii} (X) \oplus
  \EE^{}_{-1} (X) \oplus
  \EE^{}_{-\ii} (X) $.
\end{enumerate}
\end{prop}

\begin{proof}
Note first that, for all $\omega \in \cS' (\RR^d)$, we have
\[
  \sum_{k=0}^{3} P^{}_{\ii^k}(\omega)  \, = \,
  \myfrac{1}{4} \sum_{k=0}^3 \sum_{j=0}^{3} \ii^{-kj} \ts
  \cF^{j} (\omega) \, = \, \myfrac{1}{4}  \sum_{j=0}^{3}
  \biggl( \sum_{k=0}^{3} \ii^{-kj} \biggr) \ts \cF^{j} (\omega) \ts .
\]
Now, the bracketed sum vanishes unless  $j=0$, which gives
\begin{equation}\label{eq:sumE_i}
   \sum_{k=0}^{3} P^{}_{\ii^k} (\omega) \, = \, \omega
\end{equation}
and establishes Property (1).

For Property (2), for all $\lambda \in \{ \pm 1, \pm \ts \ii \}$
  and all $\omega \in \cS' (\RR^d)$, we have
\begin{align*}
  \cF \bigl( P^{}_{\lambda} (\omega)  \bigr) \,
  & = \, \cF \biggl( \myfrac{1}{4}
  \sum_{j=0}^{3} \lambda^{-j} \ts \cF^{j} (\omega) \biggr) \, = \,
  \frac{1}{4} \sum_{j=0}^{3} \lambda^{-j} \ts \cF^{j+1} (\omega) \\[1mm]
  & = \, \myfrac{\lambda}{4} \sum_{j=0}^{3} \lambda^{-j-1}
   \ts \cF^{j+1} (\omega) \, = \, \lambda  P^{}_{\lambda} (\omega) \ts ,
\end{align*}
with the last equality following from $\cF^4=\mbox{id}$ and
$\lambda^4=1$. Furthermore,
\[
  P^{}_{\lambda} \bigl( P^{}_{\lambda'} (\omega) \bigr) \,
   = \, \biggl( \myfrac{1}{4}
  \sum_{j=0}^{3} \lambda^{-j} \ts \cF^{j}
  \bigl( P^{}_{\lambda'} (\omega) \bigr) \biggr) \, = \,
  \myfrac{1}{4} \sum_{j=0}^{3} \lambda^{-j}
  \bigl( {\lambda'}^{j} P^{}_{\lambda'} (\omega) \bigr)
  \, = \, \myfrac{1}{4} \sum_{j=0}^{3}
  \bigl( \lambda^{-1}\ts \lambda' \bigr)^{j} \,
  P^{}_{\lambda'} (\omega)  \ts .
\]
Since $\lambda^{-1}\ts \lambda'$ is a fourth root of unity, we
have $\sum_{j=0}^{3} \bigl( \lambda^{-1}\ts \lambda' \bigr)^{j} = 4 \ts
\delta^{}_{\lambda,\lambda'}$, which proves this part.

The direct implication in Property (3) follows immediately from the
definition of $P^{}_\lambda$, while the converse follows from
Properties (1) and (2).

Finally, for Property (4), observe first that we have
  $P^{}_{\lambda} (X) \subseteq \EE^{}_{\lambda} (X)$ from
  Property (2).  Moreover, for all
  $\omega \in \EE^{}_{\lambda}(X)$, we have
\[
  P^{}_{\lambda}( \omega) \,
   = \, \myfrac{1}{4}
  \sum_{j=0}^{3} \lambda^{-j} \ts \cF^{j} (\omega)  \, = \,
  \myfrac{1}{4} \sum_{j=0}^{3} \lambda^{-j}
  \bigl( \lambda^j \ts \omega \bigr)
  \, = \, \omega  \ts ,
\]
so $P^{}_{\lambda} \colon X \! \xrightarrow{\quad}
\EE^{}_{\lambda}(X)$ is onto. The claim now follows from
Properties (2) and (3).
\end{proof}

\begin{remark}
  Using the work of Simon \cite{Sim}, see also \cite[Thm.~1]{CFK}, one
  can decompose $ P^{}_{\lambda}(\omega)$ as a series of Hermite
  functions, which are eigenfunctions for $\lambda$. Below, we are
  primarily interested in the case where both $\omega$ and
  $\widehat{\omega}$ are tempered measures with locally finite
  support. In this case, the Hermite function expansion does not seem
  to give further insight.  \exend
\end{remark}

Now, let us denote by $\cM_{\text{ttm}}(\RR^d)$ the space of
transformable tempered measures on $\RR^d$, which by definition is
$\cF\nts$-invariant. Therefore, we get the following result.

\begin{theorem}\label{thm:decomp-eigenspaces}
  For each\/ $\lambda \in \{ \pm 1, \pm \ts \ii \}$, the mapping\/
  $P^{}_{\lambda}$ induces a projection operator from\/
  $\cM_{\mathrm{ttm}} (\RR^d)$ onto the eigenspace\/
  $\EE_{\lambda} \bigl(\cM_{\mathrm{ttm}}
  (\RR^d) \bigr)$, with\/  $P^{}_1+P^{}_{\ts\ii}+P^{}_{-1}
  +P^{}_{-\ii}= \id$. In particular,
  $\cM_{\mathrm{ttm}} (\RR^d) =
  \EE^{}_{1} \bigl(\cM_{\mathrm{ttm}} (\RR^d)\bigr)
  \oplus \EE^{}_{\ts\ii} \bigl(\cM_{\mathrm{ttm}}
  (\RR^d)\bigr) \oplus \EE^{}_{-1}
  \bigl(\cM_{\mathrm{ttm}} (\RR^d) \bigr) \oplus
  \EE^{}_{-\ii} \bigl(\cM_{\mathrm{ttm}}
  (\RR^d)\bigr)$. \qed
\end{theorem}}

This shows that there is an abundance of eigenmeasures, which makes
a meaningful classification difficult unless certain subclasses are
specified. In this framework, for $d=1$, we shall later derive a more
explicit characterisation of eigenmeasures that are periodic or
are pure point measures with uniformly discrete support.

\begin{remark}\label{remark:sub decomp}
   Let us give some further connections.
\begin{enumerate}\itemsep=2pt
\item Proposition~\ref{prop:plambda} {and
    Theorem~\ref{thm:decomp-eigenspaces} hold} for the space of all
  measures that are strictly transformable, as well as for
  $L^2(\RR^d)$, as mentioned earlier.
\item As the authors learned from a correspondence with Feichtinger
  after submitting the first version of this paper, this type of
  result holds for any $\cF\nts$-invariant subspace of $S_0' (\RR^d)$,
  where $S_0' (\RR^d)$ is the dual of the Feichtinger algebra
  $S^{}_0 (\RR^d)$; see \cite{RSteg} and references therein for
  background and further details. \exend
\end{enumerate}
\end{remark}

Let us return to $d=1$. For $r,s \in \RR$ and $\alpha > 0$, we now
define
\begin{equation}\label{eq:def-Z}
   \cZ^{}_{r,s,\alpha} \, \defeq \, \chi^{}_{r}
   \cdot \bigl( \delta^{}_{s} \nts * \delta^{}_{\alpha \ZZ} \bigr) ,
\end{equation}
which is a translation-bounded (hence tempered) complex measure on
$\RR$ that is also transformable, as well as strictly transformable.

\begin{lemma}\label{lem:Z}
  For arbitrary\/ $r,s \in \RR$ and\/ $\alpha > 0$, the measure\/
  $\cZ^{}_{r,s,\alpha}$ from \eqref{eq:def-Z} is tempered and
  $($strictly$\, )$ transformable, and has the following properties.
\begin{enumerate}\itemsep=2pt
\item $\cZ^{}_{r,s + m \alpha,\alpha} = \cZ^{}_{r,s,\alpha}$
    holds for all\/ $m\in\ZZ$.
\item $\cZ^{}_{r \ts + \frac{m}{\alpha},s,\alpha} =
   \ee^{2 \pi \ii \frac{m s}{\alpha}}
   \cZ^{}_{r,s,\alpha}$ holds for all\/ $m\in\ZZ$.
\item $\cF (\cZ^{}_{r,s,\alpha} ) = \delta^{}_{r} \nts * \bigl(
   \chi^{}_{-s} \cdot \frac{1}{\alpha} \, \delta^{}_{\ZZ/\nts\alpha}\bigr)
   = \frac{1}{\alpha} \, \ee^{2 \pi \ii \ts r s} \, \cZ^{}_{-s, r,
     \frac{1}{\alpha}}$.
\item $\cF^2 (\cZ^{}_{r,s,\alpha}) = I \nts \nts . \ts \cZ^{}_{r,s,\alpha}
      =\cZ^{}_{-r,-s,\alpha}$.
\end{enumerate}
Moreover, when\/ $\alpha^2 = \frac{1}{n}$ for some\/ $n\in\NN$, the
decomposition\/
$\cZ^{}_{r,s,\alpha} = \sum_{m=0}^{n-1}
\cZ^{}_{r,s+m\alpha,1/\nts\alpha}$ holds for all\/ $r,s \in \RR$. More
generally, for\/ $\alpha^2 = \frac{p}{q}$ with\/ $p,q\in\NN$ coprime
and\/ $\beta=\sqrt{p q \ts}$, one has
\[
     \cZ^{}_{r,s,\alpha} \, = \sum_{m=0}^{q-1}
    \cZ^{}_{r,s+m\alpha,\beta} \ts .
\]
\end{lemma}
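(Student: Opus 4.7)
The plan is to handle the properties in the order stated, using nothing more than elementary identities for characters and lattice Dirac combs together with the convolution/multiplication swap under the Fourier transform. Throughout, $\cZ^{}_{r,s,\alpha}$ is translation bounded (its total variation equals $\delta^{}_{s+\alpha\ZZ}$), hence tempered; strict transformability follows from Fact~\ref{fact:transformable} once its distributional transform is shown to be translation bounded, which is automatic from the explicit formula derived in (3).

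For (1), I would write $\delta^{}_{s+m\alpha}\nts*\delta^{}_{\alpha\ZZ} = \delta^{}_{s}\nts*(\delta^{}_{m\alpha}\nts*\delta^{}_{\alpha\ZZ})$ and use the translation invariance $\delta^{}_{m\alpha}\nts*\delta^{}_{\alpha\ZZ}=\delta^{}_{\alpha\ZZ}$ of the lattice Dirac comb. For (2), I would split $\chi^{}_{r+m/\alpha}=\chi^{}_{r}\ts\chi^{}_{m/\alpha}$ and observe that on the support $s+\alpha\ZZ$, the character $\chi^{}_{m/\alpha}$ takes the constant value $\ee^{2\pi\ii ms/\alpha}$, which can then be pulled out of the product.

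For (3), the key ingredients are the scaled Poisson summation formula $\widehat{\delta^{}_{\alpha\ZZ}}=\frac{1}{\alpha}\ts\delta^{}_{\ZZ/\nts\alpha}$ and the swap rules $\widehat{\chi^{}_{r}\ts\mu}=\delta^{}_{r}\nts*\widehat{\mu}$ and $\widehat{\delta^{}_{s}\nts*\nu}=\chi^{}_{-s}\ts\widehat{\nu}$. Applying these in sequence to $\cZ^{}_{r,s,\alpha}=\chi^{}_{r}\cdot(\delta^{}_{s}\nts*\delta^{}_{\alpha\ZZ})$ yields directly the first expression $\delta^{}_{r}\nts*\bigl(\chi^{}_{-s}\cdot\frac{1}{\alpha}\delta^{}_{\ZZ/\nts\alpha}\bigr)$. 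To identify this with the second form, I would rewrite $\cZ^{}_{-s,r,1/\nts\alpha}=\chi^{}_{-s}\cdot(\delta^{}_{r}\nts*\delta^{}_{\ZZ/\nts\alpha})$ and use the general identity $\chi^{}_{-s}\cdot(\delta^{}_{r}\nts*\mu)=\overline{\chi^{}_{-s}(r)}\cdot\bigl(\delta^{}_{r}\nts*(\chi^{}_{-s}\ts\mu)\bigr)$, a direct generalisation of Eq.~\eqref{eq:char-1}, to produce the phase factor $\ee^{2\pi\ii rs}$. The careful bookkeeping of this phase is the one place where a sign error would be easy to make, but no conceptual obstacle is involved.

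Property (4) is immediate from Fact~\ref{fact:symm}, which gives $\cF^2(\mu)=I\nts.\ts\mu$ for any transformable tempered measure; a direct check using $I\nts.\ts\chi^{}_{r}=\chi^{}_{-r}$ and $I\nts.\ts(\delta^{}_{s}\nts*\delta^{}_{\alpha\ZZ})=\delta^{}_{-s}\nts*\delta^{}_{\alpha\ZZ}$ (because $\alpha\ZZ$ is symmetric) shows $I\nts.\ts\cZ^{}_{r,s,\alpha}=\cZ^{}_{-r,-s,\alpha}$. Finally, for the decomposition formula, I would observe that $\alpha^2=p/q$ with $\gcd(p,q)=1$ and $\beta=\sqrt{pq}$ gives $\beta=q\ts\alpha$, so $\beta\ZZ=q\ts\alpha\ZZ$ is a sublattice of $\alpha\ZZ$ of index $q$ with coset representatives $\{0,\alpha,\dots,(q-1)\alpha\}$. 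Hence $\delta^{}_{\alpha\ZZ}=\sum_{m=0}^{q-1}\delta^{}_{m\alpha}\nts*\delta^{}_{\beta\ZZ}$, and convolving with $\delta^{}_{s}$ followed by multiplication with $\chi^{}_{r}$ gives the claimed identity termwise; the special case $\alpha^2=1/n$ corresponds to $p=1$, $q=n$, so $\beta=\sqrt{n}=1/\nts\alpha$, recovering the first form. The only substantive observation here is the relation $\beta=q\ts\alpha$, which trivialises the lattice decomposition.
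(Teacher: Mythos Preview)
Your proposal is correct and follows essentially the same route as the paper's proof: the paper likewise dismisses (1) as obvious, derives (2) from the duality of $\alpha\ZZ$ and $\ZZ/\alpha$, obtains (3) from the convolution theorem together with the extension of Eq.~\eqref{eq:char-1} to $\alpha\ZZ$, checks (4) directly via $I\nts.\ts\delta_x=\delta_{-x}$ and $\chi^{}_{r}\circ I=\chi^{}_{-r}$, and handles the decomposition by the coset decomposition of $\alpha\ZZ$ modulo $\beta\ZZ$ using $\alpha=\beta/q$. One small quibble: Fact~\ref{fact:symm} as stated is about \emph{eigenmeasures}, not arbitrary transformable tempered measures, so it is not quite the right citation for $\cF^2(\mu)=I\nts.\ts\mu$; the identity you want is established in the paragraph preceding that Fact, and in any case your direct check of $I\nts.\ts\cZ^{}_{r,s,\alpha}=\cZ^{}_{-r,-s,\alpha}$ already suffices.
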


\begin{proof}
  The first relation is obvious, while the second follows from the
  fact that $\ZZ/\nts\alpha$ is dual to $\alpha\ZZ$ as a lattice. It
  is clear that $\cZ^{}_{r,s,\alpha}$ is translation bounded and
  (strictly) transformable.  Property (3) follows from the convolution
  theorem \cite[Thm.~8.5]{TAO1}, applied twice, and an extension of
  Eq.~\eqref{eq:char-1} from $\ZZ$ to $\alpha\ZZ$. Next, observing
  $I\nts \nts .\ts \delta_x = \delta_{-x}$ and
  $\chi^{}_{r} (-x) = \chi^{}_{-r} (x)$, Property (4) follows from a
  simple calculation.

  When $\alpha^2 = \frac{1}{n}$, we have
  $\alpha \ZZ = \ZZ/\nts\nts \sqrt{n}$ and
  $\ZZ/\nts \alpha = \sqrt{n} \ts\ts \ZZ$. As $n \ZZ$ is a sublattice
  of $\ZZ$ of index $n$, the summation formula follows by a standard
  coset decomposition of $\alpha\ts \ZZ$ modulo $\ZZ/\nts \alpha$.

  In the general case, one has $\alpha = \beta/\nts q$, which implies
  $\alpha \ZZ = \dot\bigcup_{m=0}^{q-1} \bigl( \beta \ZZ +
  \frac{m}{q}\beta \bigr)$.  This gives
\[
  \cZ^{}_{r,s,\alpha} \,  = \,
    \chi^{}_{r} \nts \cdot \nts \bigl( \delta^{}_{s}
    * \delta^{}_{\alpha \ZZ} \bigr) \, = \, \chi^{}_{r}
    \nts \cdot \nts \Bigl( \delta^{}_{s} * \! \sum_{m=0}^{q-1}
    \delta^{}_{\beta \ZZ + m \alpha} \Bigr)
    \, = \sum_{m=0}^{q-1} \chi^{}_{r} \nts \cdot \nts
    \bigl( \delta^{}_{s + m \alpha}
    * \delta^{}_{\beta\ZZ} \bigr) \, = \sum_{m=0}^{q-1}
  \cZ^{}_{r, s+m\alpha, \beta}
\]
  for any $r,s \in \RR$  as claimed.
\end{proof}

\begin{remark}
  The measure $\cZ^{}_{r,s,\alpha}$ is closely related to the
  widely-used Zak transform of \cite{Jan}, which is given by
  $(Zf)(\tau,\Omega)=\sum_{k\in\ZZ} f(\tau+k)\, \ee^{-2\pi\ii
    k\Omega}$, for all $\tau,\Omega\in\RR$. A short computation
  reveals
\[
    \cZ^{}_{r,s,1}(f)\, = \, \ee^{2\pi\ii rs}\cdot
    \bigl( Zf \bigr) (s,-r) \ts .
\]
Some of the results of Lemma~\ref{lem:Z} (at least for $\alpha=1$) can
be found in \cite{Jan}, formulated in terms of the Zak transform.

There is a large body of literature on this subject, also in relation
to the Weyl--Heisenberg transform and their discrete versions.
However, to the best of our knowledge, these works neither address
general questions around the transformability of unbounded measures
nor is the transform used to construct eigenmeasures under $\cF$.
\exend
\end{remark}

For $\mu = \delta^{}_{\ZZ}$, the measure $\omega^{}_{s}$ from
Lemma~\ref{lem:modif} is $\cZ^{}_{s,s,1}$, and gives the $4$-cycle
from \eqref{eq:cycle} as
\[
  \cZ^{}_{s,s,1} \, \xrightarrow{\, \cF \,} \, \ee^{2 \pi \ii s^2}
  \cZ^{}_{-s,s,1} \, \xrightarrow{\, \cF \,} \, \cZ^{}_{-s,-s,1} \,
  \xrightarrow{\, \cF \,} \, \ee^{2 \pi \ii s^2} \cZ^{}_{s,-s,1} \,
  \xrightarrow{\, \cF \,} \, \cZ^{}_{s,s,1} \ts .
\]
With this, one can choose an integer $m\in \{ 0, 1, 2, 3 \}$ and
consider the measure
\[
   \nu^{}_{m} \, \defeq \, \ee^{-\pi \ii s^2} \cZ^{}_{s,s,1} +
   \ii^m \ee^{\pi \ii s^2} \cZ^{}_{-s,s,1} +
   \ii^{2m} \ee^{-\pi \ii s^2} \cZ^{}_{-s,-s,1}+
   \ii^{3m} \ee^{\pi \ii s^2} \cZ^{}_{s,-s,1} \ts ,
\]
which satisfies {$\widehat{\nu^{}_{m}} = (-\ii)^m \ts \nu^{}_{m}$}.
When $s\in\QQ$, so $s=\frac{p}{q}$ with $p\in\ZZ$ and $q\in\NN$, the
example is (at least) $q$-periodic, while $s$ irrational provides
aperiodic examples.

\begin{prop}\label{prop:FD}
  Let\/ $\alpha>1$ be fixed, and let\/ $K$ and\/ $J$ be bounded
  fundamental domains for the lattices\/ $\alpha \ZZ$ and\/
  $\frac{1}{\alpha} \ZZ$, respectively. Then,
  $B_{\alpha} \defeq \{ \cZ^{}_{r,s,\alpha} : r\in J, s \in K \}$ is
  an algebraic basis for\/
  $V_{\alpha} \defeq \mathrm{span}^{}_{\CC}\ts \{ \cZ^{}_{r,s,\alpha}
  : r,s \in \RR \}$.
\end{prop}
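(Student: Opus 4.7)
The plan is to verify the two parts of the definition of an algebraic basis separately: that $B_{\alpha}$ spans $V_{\alpha}$, and that its elements are linearly independent over $\CC$. Both parts hinge on exploiting properties (1) and (2) of Lemma~\ref{lem:Z} together with the choice of $J$ and $K$ as fundamental domains.

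For spanning, I would use the fundamental domain property directly. Given any $r,s\in\RR$, write $s = s^{}_{0} + n\alpha$ with $s^{}_{0}\in K$ and $n\in\ZZ$, and $r = r^{}_{0} + m/\alpha$ with $r^{}_{0}\in J$ and $m\in\ZZ$. Property (1) gives $\cZ^{}_{r,s,\alpha} = \cZ^{}_{r,s^{}_{0},\alpha}$, after which property (2) yields $\cZ^{}_{r,s^{}_{0},\alpha} = \ee^{2\pi\ii m s^{}_{0}/\alpha}\,\cZ^{}_{r^{}_{0},s^{}_{0},\alpha}$, which is a non-zero scalar multiple of an element of $B_{\alpha}$. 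Hence $B_{\alpha}$ spans $V_{\alpha}$.

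For linear independence, suppose $\sum_{i=1}^{N} c^{}_{i}\,\cZ^{}_{r^{}_{i},s^{}_{i},\alpha} = 0$ with the pairs $(r^{}_{i},s^{}_{i})\in J\times K$ pairwise distinct. Since $\cZ^{}_{r,s,\alpha}$ is supported on the coset $s + \alpha\ZZ$ and $K$ is a fundamental domain for $\alpha\ZZ$, the terms with different $s^{}_{i}$ have disjoint supports; the partial sum over each fixed $s$-value must therefore vanish on its own. Fix such an $s\in K$ and let $r^{}_{1},\dots,r^{}_{k}$ be the distinct elements of $J$ occurring with that $s$. Evaluating the resulting pure point measure at the point $x = s + \alpha n$ for each $n\in\ZZ$ gives
\[
   \sum_{j=1}^{k} c^{}_{j}\,\ee^{2\pi\ii\ts r^{}_{j}(s+\alpha n)} \, = \, 0 \ts ,
\]
equivalently $\sum_{j} a^{}_{j}\,\zeta^{n}_{j} = 0$ for all $n\in\ZZ$, where $a^{}_{j} \defeq c^{}_{j}\,\ee^{2\pi\ii\ts r^{}_{j} s}$ and $\zeta^{}_{j} \defeq \ee^{2\pi\ii\ts r^{}_{j}\alpha}$. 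Because $J$ is a fundamental domain for $\frac{1}{\alpha}\ZZ$, two distinct $r^{}_{j}, r^{}_{j'} \in J$ differ by an element outside $\frac{1}{\alpha}\ZZ$, so $\zeta^{}_{j}\ne\zeta^{}_{j'}$ in $\SSS^1$. A Vandermonde determinant (or, equivalently, the linear independence of distinct characters of $\ZZ$) then forces all $a^{}_{j}=0$, whence $c^{}_{j}=0$ for every $j$.

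There is no serious obstacle here; what requires care is only the bookkeeping, namely that $K$ and $J$ play complementary roles: $K$ separates the supports so that the sum reduces to a single coset $s+\alpha\ZZ$, while $J$ guarantees that the characters $n\mapsto\zeta_{j}^{n}$ attached to distinct $r^{}_{j}$ on that coset are themselves distinct and thus linearly independent.
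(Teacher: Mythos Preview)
Your proof is correct. The spanning argument is identical to the paper's. For linear independence, both arguments begin the same way, using the fundamental domain $K$ to separate the sum by support into the individual cosets $s+\alpha\ZZ$; the difference lies in how the remaining relation on a single coset is handled. The paper applies the Fourier transform (Lemma~\ref{lem:Z}{\ts}(3)) at this point, which swaps the roles of $r$ and $s$ and replaces $\alpha$ by $1/\alpha$, so that the same support-separation trick can be repeated using the fundamental domain $J$. You instead evaluate the measure pointwise along $s+\alpha\ZZ$ and reduce to a Vandermonde system in the characters $\zeta_j = \ee^{2\pi\ii r_j\alpha}$, whose distinctness is exactly what $J$ being a fundamental domain for $\frac{1}{\alpha}\ZZ$ guarantees. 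Your route is more elementary and self-contained (no appeal to $\cF$ is needed), while the paper's route is more structural, exhibiting the symmetry between the $(s,K,\alpha)$ and $(r,J,1/\alpha)$ data via duality. Both yield the result with the same amount of effort.
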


\begin{proof}
  By Lemma~\ref{lem:Z}{\ts}(1) and (2), it is clear that the elements
  of $B_{\alpha}$ are distinct and that $B_{\alpha}$ is a spanning set
  for $V_{\alpha}$, which is the complex vector space of all
  \emph{finite} linear combinations of measures $\cZ^{}_{r,s,\alpha}$
  with fixed $\alpha$.  It thus remains to show linear independence of
  the elements of $B_{\alpha}$. So, let $n\in\NN$ and assume
\begin{equation}\label{eq:lin}
  c^{}_{1} \cZ^{}_{r^{}_1, s^{}_1, \alpha} + \dots +
  c^{}_{n} \cZ^{}_{r^{}_n, s^{}_n, \alpha} \, = \, 0
\end{equation}
with $s_i \in K$ and $r_j \in J$ such that the $n$ pairs
$(s_i , r_i )$ are distinct. Since $K$ is a proper funda\-mental
domain for $\alpha \ZZ$, for any $1 \leqslant k,\ell \leqslant n$, we
have either $s^{}_{k} = s^{}_{\ell}$ or
$(s^{}_{k} + \alpha \ZZ) \cap (s^{}_{\ell} + \alpha \ZZ) =
\varnothing$.

Select some $1\leqslant k \leqslant n$ and set
$F^{}_{k} = \{ 1 \leqslant \ell \leqslant n : s^{}_{\ell} = s^{}_{k}
\}$. Restricting \eqref{eq:lin} to $s^{}_{k} + \alpha \ZZ$ gives
\[
  \sum_{\ell \in F^{}_{k}}  c^{}_{\ell} \ts
  \cZ^{}_{r^{}_{\nts\ell}, s^{}_{k}, \alpha} \, = \, 0 \ts .
\]
Taking the Fourier transform turns this relation into
\begin{equation}\label{eq:lin-3}
  \sum_{\ell\in F^{}_{k}} \tfrac{c^{}_{\ell}}{\alpha}
  \,\ee^{2 \pi \ii r^{}_{\nts \ell} s^{}_{k}}
  \cZ^{}_{- s^{}_{k}, r^{}_{\nts \ell}, \frac{1}{\alpha}}
  \, = \, 0 \ts .
\end{equation}
{Next, we note that, for all $\ell \in F^{}_{k}$ with $\ell \ne k$, we
  have $r^{}_{\nts \ell} \ne r^{}_{k}$, as $s^{}_{\nts \ell} = s^{}_{k}$ and
  $(s^{}_{\nts \ell}, r^{}_{\nts \ell}) \ne (s^{}_{k}, r^{}_{k})$.} Consequently,
as $r^{}_{k}, r^{}_{\nts \ell} \in J$, we have
\[
  (r^{}_{k} + \tfrac{1}{\alpha} \ZZ) \cap
  (r^{}_{\nts\ell} + \tfrac{1}{\alpha} \ZZ)
  \, = \, \varnothing \ts .
\]
Now, restricting \eqref{eq:lin-3} to $r^{}_{k} + \frac{1}{\alpha} \ZZ$
gives
\[
  \tfrac{c^{}_k}{\alpha} \ee^{2 \pi \ii r^{}_{k} s^{}_{k} }
  \cZ^{}_{-s^{}_{k}, r^{}_{k}, \frac{1}{\alpha}} \, = \, 0 \ts ,
\]
which implies $c^{}_{k} = 0$. Since $k$ was arbitrary, we are done.
\end{proof}

Let us now look into classes of periodic eigenmeasures more
systematically.

\section{Lattice-periodic eigenmeasures}\label{sec:cryst}

If $\vG$ is a lattice in $\RR^d$, and $\delta^{}_{\nts\vG}$ the
corresponding Dirac comb, the PSF from \eqref{eq:PSF-1} becomes
\begin{equation}\label{eq:PSF-2}
    \widehat{\delta^{}_{\nts\vG}} \, = \, \dens (\vG)
    \, \delta^{}_{\vG^*} \ts ,
\end{equation}
where $\dens (\vG)$ is the (uniformly existing) density of $\vG$, and
$\vG^*$ denotes the dual lattice, as given by
$\vG^* = \{ x \in \RR^d : x y \in \ZZ \text{ for all } y \in \vG\ts\}$;
compare \cite[Thm.~9.1]{TAO1}.

\begin{fact}\label{fact:lattice}
  If\/ $\vG\subset \RR^d$ is a lattice, the Dirac comb\/
  $\delta^{}_{\nts\vG}$ is an eigenmeasure for\/ $\cF$ if and only
  if\/ $\vG$ is self-dual, that is, $\vG^* = \vG$, which also
  implies\/ $\dens (\vG) = 1$.
\end{fact}

\begin{proof}
  $ \widehat{\delta^{}_{\nts\vG}} = \lambda\ts \delta^{}_{\nts\vG}$ means
  $\vG^*=\vG$ and $\lambda = \dens (\vG)$, via the PSF, together with
  $\lambda^4=1$. As $\lambda=1$ is the only positive, real solution,
  we get $\dens (\vL) = 1$ and $\vG^* = \vG$ as claimed.
\end{proof}

To go beyond this case, let us start with $d=1$ and assume $\mu$ to be
a measure on $\RR$ that is $\alpha$-periodic, for some $\alpha>0$,
which is to say that $\delta^{}_{\alpha} \nts * \mu = \mu$. Such a
measure is of the form
\begin{equation}\label{eq:cryst-1}
     \mu \, = \, \varrho * \delta^{}_{\alpha \ZZ} \ts ,
\end{equation}
where $\varrho$ is a \emph{finite} measure; compare
\cite[Sec.~9.2.3]{TAO1}. Without loss of generality, we may assume
that $\supp (\varrho) \subseteq [0,\alpha]$ with
$\varrho( \{ \alpha \}) = 0$, for instance by setting
$\varrho = \mu |^{}_{[0,\alpha)}$. As the periodic repetition of a
finite motif, $\mu$ obviously is translation bounded, hence also
tempered. Any such measure is (strictly) transformable
\cite[Cor.~6.1]{ARMA1}.

\begin{lemma}\label{lem:cryst}
  Let\/ $\alpha>0$ be fixed, and let\/ $\mu\ne 0$ be a tempered
  measure on\/ $\RR$ that is\/ $\alpha$-periodic.  If\/
  $\widehat{\mu} = \lambda \mu$, one has\/
  $\lambda \in \{ 1, \ii, -1, -\ii \}$ together with\/ $\alpha^2 = n$
  for some\/ $n\in\NN$. In particular, any such measure must be of the
  form \eqref{eq:cryst-1}, where the finite measure\/ $\varrho$ can be
  chosen to have support in\/ $[0,\alpha) \cap \ZZ/\nts\alpha $, that
  is, in the natural coset representatives of\/ $\ZZ/\nts\alpha$
  modulo\/ $\alpha \ZZ$.
\end{lemma}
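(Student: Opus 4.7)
The plan has three ingredients, one of which (the eigenvalue restriction) is already in place: since $\mu$ is a non-trivial tempered eigenmeasure, Fact~\ref{fact:symm} immediately gives $\lambda \in \{1,\ii,-1,-\ii\}$, and in particular $\lambda \ne 0$. So the content to be proved is the statement $\alpha^2 \in \NN$ together with the localisation of the support of $\mu$ in the lattice $\ZZ/\nts\alpha$, from which the choice of $\varrho$ in \eqref{eq:cryst-1} supported in $[0,\alpha) \cap \ZZ/\nts\alpha$ will follow at once.

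The approach I would take is to exploit periodicity twice, once as a support statement for $\widehat{\mu}$ and once as a support statement for $\mu$. First I would invoke the standard structure result cited after \eqref{eq:cryst-1} to write $\mu = \varrho * \delta^{}_{\alpha\ZZ}$ with $\varrho$ a finite measure supported in $[0,\alpha)$, which in particular makes $\mu$ translation bounded and strictly transformable. Then I would apply the convolution theorem together with the Poisson summation formula for $\alpha\ZZ$ to compute
\[
   \widehat{\mu} \, = \, \widehat{\varrho} \cdot
   \widehat{\delta^{}_{\alpha\ZZ}} \, = \,
   \tfrac{1}{\alpha}\, \widehat{\varrho}\cdot \delta^{}_{\ZZ/\nts\alpha} \ts ,
\]
which is a pure point measure whose support sits inside the dual lattice $\ZZ/\nts\alpha$. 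The eigenmeasure relation $\widehat{\mu} = \lambda\mu$ with $\lambda \ne 0$ then transfers this to $\mu$ itself, so $\mu$ is a pure point measure with $\supp(\mu) \subseteq \ZZ/\nts\alpha$.

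With this in hand, the arithmetic condition on $\alpha$ is almost forced. Since $\mu \ne 0$, I can pick any $x \in \supp(\mu)$; by $\alpha$-periodicity, also $x + \alpha \in \supp(\mu) \subseteq \ZZ/\nts\alpha$, whence $\alpha = (x+\alpha) - x \in \ZZ/\nts\alpha$, i.e.\ $\alpha^2 \in \ZZ$, and $\alpha > 0$ gives $\alpha^2 = n \in \NN$. This in turn implies $\alpha\ZZ \subseteq \ZZ/\nts\alpha$, so the natural coset representatives of $\ZZ/\nts\alpha$ modulo $\alpha\ZZ$ are given by $\ZZ/\nts\alpha \cap [0,\alpha) = \{k/\alpha : 0 \leqslant k \leqslant n-1\}$; choosing $\varrho = \mu|^{}_{[0,\alpha)}$ then yields a finite measure supported in $[0,\alpha) \cap \ZZ/\nts\alpha$ and realising \eqref{eq:cryst-1}.

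The only step that requires some care is the application of the convolution theorem to the unbounded product $\varrho * \delta^{}_{\alpha\ZZ}$; this is where Fact~\ref{fact:transformable} and the strict transformability recorded after \eqref{eq:cryst-1} enter, so that the distributional and the measure-theoretic transforms coincide and $\widehat{\varrho} \cdot \delta^{}_{\ZZ/\nts\alpha}$ is well defined as a product of a continuous function with a translation-bounded pure point measure. Everything else — the transfer of the support condition through $\widehat{\mu} = \lambda\mu$, and the elementary coset computation that pins down $\alpha^2$ — is essentially automatic once that identification is made.
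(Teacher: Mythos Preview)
Your proposal is correct and follows essentially the same route as the paper: write $\mu = \varrho * \delta^{}_{\alpha\ZZ}$, compute $\widehat{\mu} = \alpha^{-1}\widehat{\varrho}\,\delta^{}_{\ZZ/\alpha}$ via the convolution theorem and PSF, transfer the support inclusion $\supp(\widehat{\mu})\subseteq\ZZ/\alpha$ back to $\mu$ through $\widehat{\mu}=\lambda\mu$, and then extract $\alpha^2\in\NN$ by an elementary arithmetic step. Your version of that last step (pick $x\in\supp(\mu)\subseteq\ZZ/\alpha$, use $\alpha$-periodicity to get $x+\alpha\in\ZZ/\alpha$, subtract) is a slightly slicker rephrasing of the paper's Minkowski-sum manipulation $\alpha\supp(\varrho)+\alpha^2\ZZ\subseteq\ZZ$, but the underlying idea is identical.
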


\begin{proof}
  Under the assumption, we may use the representation from
  \eqref{eq:cryst-1}, together with the choice of $\varrho$.  From
  here, we see that $\mu$ is strictly transformable, and the
  convolution theorem \cite[Thm.~8.5]{TAO1} in conjunction with the
  general PSF from Eq.~\eqref{eq:PSF-2} gives
\begin{equation}\label{eq:cryst-2}
    \widehat{\mu} \, = \, \alpha^{-1}
    \widehat{\varrho} \,
    \delta^{}_{\ZZ/\nts\alpha} \ts .
\end{equation}
Since $\widehat{\varrho}\ts$ is a continuous function on $\RR$, this
entails that $\supp (\widehat{\mu}) \subseteq \ZZ/\nts\alpha$, where
$\ZZ/\alpha$ is a lattice and thus uniformly discrete as a point set.

If $\widehat{\mu} =\lambda \mu$, where $\mu$ is non-trivial, we know
that $\lambda^4=1$. Now, comparing the representation of $\mu$ from
\eqref{eq:cryst-1} with Eq.~\eqref{eq:cryst-2} implies
$\supp(\mu) = \supp(\varrho) + \alpha \ZZ \subseteq \ZZ/\nts\alpha$,
hence
\[
    \alpha \ts \supp(\varrho) + \alpha^2 \ZZ
    \, \subseteq \, \ZZ \ts ,
\]
where $A+B$ denotes the Minkowski sum of two sets.  This inclusion
implies $\alpha \supp(\varrho) \subseteq \ZZ$ because
$0\in \alpha^2 \ZZ$.  Since $\alpha \supp (\varrho) \ne \varnothing$,
it contains some integer, $m$ say, which now results in
$\alpha^2 \ZZ \subseteq \ZZ$.  But this means that $\alpha^2 = n$ must
be an integer, and $\supp(\varrho) \subset \ZZ/\nts\alpha$.

Under our assumptions, and with our choice of $\varrho$, we get
$\supp (\varrho) \subseteq [0,\alpha) \cap \ZZ/\nts\alpha$, so
$\varrho$ must be a pure point measure with support in the finite set
$\{ \frac{m}{\alpha} : 0 \leqslant m < n \}$, which establishes the
last claim.
\end{proof}

This provides a class of Dirac combs with pure point diffraction
\cite{BM}, with the additional property that they are doubly sparse in
the sense of \cite{LO1,LO2,BST},  see also \cite{Kol}.

\subsection{Connection with discrete Fourier transform}

Let $\alpha = \sqrt{n}$ with $n\in \NN$ be fixed, and consider the
pure point measure
\begin{equation}\label{eq:mu-DFT}
    \mu \, = \, \varrho * \delta^{}_{\alpha \ZZ}
    \quad \text{with} \quad  \varrho \, =
    \sum_{m=0}^{n-1} c^{}_{m} \, \delta^{}_{m/\nts\alpha} \ts .
\end{equation}
Clearly, $\mu$ is \emph{crystallographic} in the sense of \cite{TAO1}.
Indeed, it is $\alpha$-periodic, but might also have smaller periods,
depending on the coefficients $c^{}_{m}$.  Moreover, $\mu$ is
transformable, with
\[
  \widehat{\mu} \, = \, \widehat{\varrho} \:
  \widehat{\delta^{}_{\alpha \ZZ}} \, = \, \alpha^{-1}
  \sum_{m=0}^{n-1} c^{}_{m} \, \ee^{- 2 \pi \ii \frac{m}{\alpha} (.)}
  \, \delta^{}_{\ZZ/\nts\alpha} \ts .
\]
Observing that
$\delta^{}_{\ZZ/\nts\alpha} = \delta^{}_{\alpha \ZZ} *
\sum_{\ell=0}^{n-1} \delta^{}_{\ell/\nts\alpha}$, one sees that the
value of the character in the above sum only depends on the coset of
$\alpha \ZZ$, thus giving the simplification
\begin{equation}\label{eq:mu-DFT-2}
   \widehat{\mu} \, = \, \delta^{}_{\alpha \ZZ} \ts  *
   \sum_{\ell=0}^{n-1} \biggl(\! \myfrac{1}{\sqrt{n}}
   \sum_{m=0}^{n-1} \ee^{- 2 \pi \ii \frac{m \ell }{n}} \, c^{}_{m}
   \!\biggr) \delta^{}_{\ell/\nts\alpha}  \ts .
\end{equation}
The crucial observation to make here is that the term in brackets is
the \emph{discrete Fourier transform} (DFT) of the vector
$c = (c^{}_{0}, \ldots , c^{}_{n-1} )^{T}$.

If $U_n$ denotes the unitary DFT or Fourier matrix, it is well known
that its eigenvalues satisfy $\lambda \in \{ 1, \ii, -1, -\ii
\}$. Their multiplicities, in closed terms, were already known to
Gauss, while the eigenvectors are difficult,\footnote{A good summary
  with all the relevant details and references can be found in the
  \textsc{WikipediA} entry on the DFT, which is recommended for
  background. Some results are collected in our Appendix.}  and still
not known in closed form; see \cite{Gruenbaum} for a version that
relates to the Hermite functions of the continuous case.

\begin{theorem}\label{thm:lattice}
  Let\/ $\alpha > 0$ be fixed, and let\/ $\lambda$ be a fourth root of
  unity. Further, let\/ $\mu\ne 0$ be a measure on\/ $\RR$ that is\/
  $\alpha$-periodic. Then, the following properties are equivalent.
\begin{enumerate}\itemsep=2pt
\item $\mu$ is an eigenmeasure of\/ $\cF$ with eigenvalue\/ $\lambda$.
\item $\mu$ is a strict eigenmeasure of\/ $\cF$ with eigenvalue\/
  $\lambda$.
\item The measure\/ $\mu$ has the form \eqref{eq:mu-DFT} with\/
  $\alpha = \sqrt{n}$, for some\/ $n\in\NN$ and some eigenvector\/
  $c= (c^{}_{0}, \ldots , c^{}_{n-1} )^{T} \in \CC^n$ of the
  corresponding DFT, meaning\/ $U_n c = \lambda c$.
\end{enumerate}
\end{theorem}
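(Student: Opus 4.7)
The plan is to prove the implications $(2) \Rightarrow (1) \Rightarrow (3) \Rightarrow (2)$, with the bulk of the work concentrated in the middle step via Lemma~\ref{lem:cryst} and the DFT computation already set up in Eq.~\eqref{eq:mu-DFT-2}.

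First, $(2) \Rightarrow (1)$ is essentially definitional: if $\mu$ is strictly transformable with $\cF(\mu) = \lambda \mu$ in the measure sense, then by Fact~\ref{fact:transformable} the two transforms agree, so $\mu$ is transformable as a tempered distribution with the same relation, making it an eigenmeasure in the sense of Definition~\ref{def:feigen}.

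Next, for $(1) \Rightarrow (3)$, I would invoke Lemma~\ref{lem:cryst} directly: the hypothesis that $\mu \neq 0$ is $\alpha$-periodic with $\widehat{\mu} = \lambda\mu$ forces $\alpha^2 = n \in \NN$ and the representation $\mu = \varrho * \delta_{\alpha\ZZ}$ with $\varrho$ a finite pure point measure supported in $\{m/\alpha : 0 \leqslant m < n\}$. Writing $\varrho = \sum_{m=0}^{n-1} c_m\ts \delta_{m/\alpha}$ puts $\mu$ in the form \eqref{eq:mu-DFT}. Then I would apply the computation leading to \eqref{eq:mu-DFT-2}: the Fourier transform is supported on $\ZZ/\alpha$, and re-expressing $\mu$ itself by writing $\delta_{\ZZ/\nts\alpha} = \delta_{\alpha\ZZ} * \sum_{\ell=0}^{n-1}\delta_{\ell/\alpha}$ gives $\mu = \delta_{\alpha\ZZ} * \sum_{\ell=0}^{n-1} c_\ell\ts \delta_{\ell/\alpha}$. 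Comparing with \eqref{eq:mu-DFT-2}, the eigenvalue relation $\widehat{\mu} = \lambda\mu$ becomes the coefficient-wise equation
\[
    \myfrac{1}{\sqrt{n}} \sum_{m=0}^{n-1} \ee^{-2\pi\ii \frac{m\ell}{n}}\, c_m \, = \, \lambda\, c_\ell
\]
for every $0 \leqslant \ell < n$, which is precisely $U_n c = \lambda c$. The main subtlety here is justifying that one can identify coefficients: this rests on the fact that $\{\delta_{\ell/\alpha} : 0 \leqslant \ell < n\}$ forms, modulo $\alpha\ZZ$-periodicity, a set of linearly independent generators of the pure point $\alpha$-periodic measures supported in $\ZZ/\alpha$ — this is essentially a special case of Proposition~\ref{prop:FD}.

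Finally, $(3) \Rightarrow (2)$ reverses the computation: given that $\mu$ has the form \eqref{eq:mu-DFT} with $U_n c = \lambda c$, the measure $\mu$ is crystallographic and hence strictly transformable by \cite[Cor.~6.1]{ARMA1} (as noted after \eqref{eq:cryst-1}). Plugging $U_n c = \lambda c$ into \eqref{eq:mu-DFT-2} yields
\[
    \widehat{\mu} \, = \, \delta_{\alpha\ZZ} * \sum_{\ell=0}^{n-1} \lambda\, c_\ell\, \delta_{\ell/\alpha} \, = \, \lambda\, \mu,
\]
giving the strict eigenmeasure relation and closing the cycle. The only step requiring genuine care is the coefficient identification in $(1) \Rightarrow (3)$; everything else is bookkeeping combining Lemma~\ref{lem:cryst} with the DFT identity already derived in Section~\ref{sec:cryst}.
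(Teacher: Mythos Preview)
Your proposal is correct and follows essentially the same approach as the paper: the paper establishes $(1)\Leftrightarrow(2)$ via translation-boundedness of periodic measures together with Fact~\ref{fact:transformable}, and $(1)\Leftrightarrow(3)$ via Lemma~\ref{lem:cryst} plus a comparison of coefficients between \eqref{eq:mu-DFT} and \eqref{eq:mu-DFT-2}. Your cyclic organisation $(2)\Rightarrow(1)\Rightarrow(3)\Rightarrow(2)$ uses the same ingredients, and your explicit attention to the coefficient-identification step (which the paper leaves implicit) is a welcome clarification rather than a departure.
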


\begin{proof}
  As pointed out above, any $\alpha$-periodic measure is translation
  bounded. Then, $(1) \Leftrightarrow (2)$ is a consequence of
  Fact~\ref{fact:transformable}.

  The equivalence $(1) \Leftrightarrow (3)$ follows from
  Lemma~\ref{lem:cryst} by a comparison of coefficients in
  \eqref{eq:mu-DFT} and \eqref{eq:mu-DFT-2}.
\end{proof}

\begin{example}\label{ex:DFT-1}
  When $\alpha^2=1$ in Theorem~\ref{thm:lattice}, the only solution we
  get (up to a constant) is $\mu=\delta^{}_{\ZZ}$, with $\lambda = 1$,
  as expected.

  When $\alpha^2=2$, so $\alpha = \sqrt{2}$, the measure $\varrho$ in
  \eqref{eq:cryst-1} has the form
  $\varrho = c^{}_{0} \ts \delta^{}_{0} + c^{}_{1} \ts
  \delta^{}_{1/\nts\alpha}$.  Here, with $c^{}_{0} = 1 \pm \sqrt{2}$
  and $c^{}_{1} = 1$, one gets eigenmeasures with $\lambda = \pm 1$,
  which are the only possibilities (up to an overall
  constant). Examples with larger values of $\alpha$ can be
  constructed with the material given in the Appendix.  \exend
\end{example}

\begin{remark}\label{rem:symm}
  Note that Fact~\ref{fact:symm} has consequences on the structure of
  the eigenvectors $c$ of the DFT. Indeed, it is clear that we get
  $c^{}_{0} = \lambda^2 c^{}_{0}$ together with
 \[
     c^{}_{n-k} \, = \lambda^2 c^{}_{k} \, ,
     \quad \text{for } 1 \leqslant k \leqslant n-1 \ts .
 \]
 In particular, for $\lambda = \pm 1$, the (partial) vector
 $(c^{}_{1}, \ldots , c^{}_{n-1})$ is palindromic, while
 $\lambda = \pm \ts \ii$ implies $c^{}_{0} = 0$ together with
 $(c^{}_{1}, \ldots , c^{}_{n-1})$ being skew-palindromic.  This is
 clearly visible in the above examples and in those of the Appendix.
 \exend
\end{remark}

Let us next consider another class of eigenmeasures that implicitly
emerge from a \emph{cut and project scheme} (CPS) in the theory of
aperiodic order; see \cite{TAO1} for background. This will provide
non-periodic examples of pure point eigenmeasures.

\section{Shadows of product measures}\label{sec:shadows}

Let us return to the simple statement from Fact~\ref{fact:lattice},
and extend it into a different direction.  Here and below, we always
assume $\RR^d$ to be equipped with Lebesgue measure as its (standard)
Haar measure, that is, the unique translation-invariant measure which
gives volume $1$ to the unit cube. Clearly, $\vG$ can only be
self-dual if $\dens (\vG) = 1$, which is to say that its fundamental
domain has unit volume. Note that this is not a sufficient criterion
for $d>1$.

More generally, when $B$ denotes a basis matrix for $\vG$, with the
columns being the basis vectors in Cartesian coordinates,
$B^* \defeq (B^{-1})^{T}$ is the \emph{dual matrix}, and the fitting
basis matrix for $\vG^*$. In this formulation, self-duality of $\vG$
is equivalent with orthogonality of the matrix $B$. It is now easy to
check that the only self-dual lattices in $\RR^2$ are the square
lattices, that is, $\ZZ^2$ and its rotated siblings. A natural choice
for the basis matrix thus is
\begin{equation}\label{eq:def-B}
  B \, = \, \begin{pmatrix}
     \cos (\theta ) & -\sin (\theta) \\
     \sin (\theta) & \cos (\theta) \end{pmatrix}
     \, = \, \bigl( u^{}_{\theta}, v^{}_{\theta} \bigr) ,
\end{equation}
where $u^{}_{\theta}$ and $v^{}_{\theta}$ are the column vectors. We
denote the corresponding lattice by $\vG^{}_{\theta}$. Note that the
parameter can be restricted to $0 \leqslant \theta < \frac{\pi}{2}$,
which covers all cases once, due to the fourfold rotational symmetry
of the square lattice. Given this choice of basis, we can always
uniquely write $z\in\vG^{}_{\theta}$ as
$z = m \ts u^{}_{\theta} + n \ts v^{}_{\theta}$ with $m,n \in \ZZ$.
We thus map $z\in\vG^{}_{\theta}$ to a pair
$(m^{}_{z},n^{}_{z}) \in \ZZ^2$ via this choice of basis.

The next result is standard \cite[Thm.~VII.XIV]{Schw}, and follows
from a simple Fubini-type calculation with the product Lebesgue
measure on $\RR^{p+q} = \RR^p \!  \times \nts \RR^q$, where the
function
$f\nts\otimes g \nts : \, \RR^p \! \times \nts \RR^q
\xrightarrow{\quad} \CC$ is defined by
$f\nts\otimes g \, (x,y) \defeq f(x) \, g(y)$ as usual.

\begin{fact}\label{fact:tensor}
  Let\/ $p,q\in\NN$, $f\in L^{1} (\RR^p)$, $g\in L^{1} (\RR^q)$, and
  consider the function\/ $h \defeq f\nts\otimes g$. Then,
  $h \in L^{1} (\RR^{p+q})$, and its Fourier transform reads\/
  $\widehat{h} = \widehat{f} \nts\otimes \widehat{g}$. In particular,
  this applies to\/ $f\in\cS (\RR^{p})$ and\/ $g\in\cS (\RR^{q})$,
  with\/ $h \in \cS (\RR^{p+q})$. \qed
\end{fact}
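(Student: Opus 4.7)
The plan is to reduce everything to Fubini/Tonelli on the product space $\RR^{p+q} = \RR^p \times \RR^q$, exploiting the fact that both the Lebesgue measure and the Fourier kernel factor as products.

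First I would verify that $h = f \otimes g$ lies in $L^{1} (\RR^{p+q})$. Since $\lvert h(x,y)\rvert = \lvert f(x)\rvert \, \lvert g(y) \rvert$ is a non-negative measurable function, Tonelli's theorem gives
\[
   \int_{\RR^{p+q}} \lvert h(x,y) \rvert \dd (x,y)
   \, = \, \Bigl( \int_{\RR^{p}} \lvert f(x) \rvert \dd x \Bigr)
   \Bigl( \int_{\RR^{q}} \lvert g(y) \rvert \dd y \Bigr)
   \, = \, \| f \|^{}_{1} \cdot \| g \|^{}_{1} \, < \, \infty \ts .
\]
Thus $h\in L^1(\RR^{p+q})$, and its Fourier transform is well defined pointwise.

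Next, I would compute $\widehat{h}$ at an arbitrary $(\xi,\eta) \in \RR^{p}\times \RR^{q}$. The key observation is that the character on $\RR^{p+q}$ factors, namely $\ee^{-2\pi\ii\, (x,y) \cdot (\xi,\eta)} = \ee^{-2\pi\ii\, x\xi}\, \ee^{-2\pi\ii\, y\eta}$. Since $h\in L^{1} (\RR^{p+q})$, Fubini's theorem applies and gives
\[
  \widehat{h} (\xi,\eta) \, = \int_{\RR^{p+q}} f(x)\, g(y)\,
  \ee^{-2\pi\ii\, x\xi}\, \ee^{-2\pi\ii\, y\eta} \dd(x,y)
  \, = \, \widehat{f}(\xi) \, \widehat{g}(\eta) \ts ,
\]
which is precisely $\widehat{h} = \widehat{f} \otimes \widehat{g}$.

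For the Schwartz addendum, I would argue that $\cS$-membership is stable under tensor products. Indeed, all mixed partial derivatives satisfy $\partial^{\alpha}_{x} \partial^{\beta}_{y} (f \nts\otimes g) = (\partial^{\alpha} f) \nts\otimes (\partial^{\beta} g)$, and since monomial weights in $(x,y)$ split as products of monomials in $x$ and in $y$, every Schwartz seminorm of $h$ is dominated by a product of Schwartz seminorms of $f$ and $g$. Hence $h \in \cS (\RR^{p+q})$, and the $L^{1}$-computation above yields $\widehat{h} = \widehat{f} \otimes \widehat{g} \in \cS(\RR^{p+q})$ as well. There is no serious obstacle here; the only point requiring care is ensuring that Fubini's hypothesis is satisfied before interchanging integrals, which is why the $L^{1}$-step must precede the Fourier computation.
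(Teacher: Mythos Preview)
Your argument is correct and is exactly the ``simple Fubini-type calculation'' the paper alludes to; the paper does not spell out a proof but merely cites \cite[Thm.~VII.XIV]{Schw} and notes that the result follows from Fubini on the product space. Your treatment of the Schwartz case via the product structure of derivatives and polynomial weights is likewise standard and matches the intended reasoning.
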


Now, let $\vG \subset \RR^{d+m}$ be a lattice, let $g \in \cS(\RR^m)$
be fixed, and consider
\begin{equation}\label{eq:def-kamm}
  \delta^{\ts\star}_{\nts g,\vG} (f) \, \defeq \,
  \delta^{}_{\nts\vG} (f\nts \otimes g) \ts .
\end{equation}
Since the mapping
$\cS(\RR^d) \ni f \mapsto f \otimes g \in \cS(\RR^{d+m})$ is
continuous with respect to the topology of Schwartz space, and since
$\delta^{}_{\nts\vG}$ is a tempered distribution, it is clear that
$\delta^{\star}_{g,\vG}$ is a tempered distribution as well. In fact,
we have more as follows.

\begin{prop}\label{prop:A}
  The measure\/ $\delta^{\ts\star_{}}_{\nts g,\vG}$ from
  \eqref{eq:def-kamm} is translation bounded and transformable, with
  the transform\/
  $\widehat{\delta^{\ts\star_{}}_{\nts g,\vG}} = \dens(\vG)\,
  \delta^{\ts\star_{}}_{\widecheck{g},\vG^{*}_{\vphantom{t}}} $.
\end{prop}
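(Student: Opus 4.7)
The plan is to establish three points in sequence: (i) that $\delta^{\ts\star}_{\nts g,\vG}$ defines a translation-bounded Radon measure on $\RR^d$, (ii) that its distributional Fourier transform equals $\dens(\vG)\, \delta^{\ts\star}_{\widecheck{g},\vG^{*}_{\vphantom{t}}}$, to be computed via Fact~\ref{fact:tensor} combined with the PSF~\eqref{eq:PSF-2}, and (iii) that Fact~\ref{fact:transformable} then upgrades distributional transformability to strict transformability as a measure.

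For~(i), I would unfold the defining formula to the heuristic sum $\delta^{\ts\star}_{\nts g,\vG}(f) = \sum_{\gamma\in\vG} f(\pi^{}_{1}\gamma)\ts g(\pi^{}_{2}\gamma)$, where $\pi^{}_{1}$ and $\pi^{}_{2}$ denote the projections from $\RR^{d+m}$ onto $\RR^d$ and $\RR^m$, respectively. To see that this yields a translation-bounded measure, fix a compact $K\subset\RR^d$ with non-empty interior and cover $\RR^m$ by unit cubes $Q^{}_{n} = n + [0,1]^m$ with $n\in\ZZ^m$. Since $\delta^{}_{\vG}$ is translation bounded on $\RR^{d+m}$, there is a constant $C$, independent of $t\in\RR^d$ and $n\in\ZZ^m$, such that $\#\bigl(\vG \cap ((t+K)\times Q^{}_{n})\bigr) \leqslant C$. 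Combined with the Schwartz decay estimate $\sup_{y\in Q^{}_{n}} \lvert g(y)\rvert \leqslant C^{}_{N}(1+\lvert n\rvert)^{-N}$ for any $N\in\NN$, the bound $\lvert\delta^{\ts\star}_{\nts g,\vG}\rvert (t+K) \leqslant C\sum_{n\in\ZZ^m} \sup_{y\in Q^{}_{n}}\lvert g(y)\rvert < \infty$ then holds uniformly in $t$. I expect this to be the main obstacle, because the projection $\pi^{}_{1}(\vG)$ need not be discrete in $\RR^d$, so that the measure property of the shadow is far from automatic; it is the Schwartz decay of $g$ that rescues absolute summability.

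For~(ii), the key identity is $\widehat{f}\otimes g = \widehat{f\otimes \widecheck{g}}$ for $f\in\cS(\RR^d)$, which follows from Fact~\ref{fact:tensor} together with the inversion relation $\widehat{\nts\widecheck{g}\ts}=g$. Then, for any $f\in\cS(\RR^d)$,
\[
  \widehat{\delta^{\ts\star}_{\nts g,\vG}}(f) \, = \, \delta^{}_{\vG}\bigl(\widehat{f\otimes\widecheck{g}}\ts\bigr) \, = \, \widehat{\delta^{}_{\vG}}(f\otimes\widecheck{g}) \, = \, \dens(\vG)\,\delta^{}_{\vG^*}(f\otimes\widecheck{g}) \, = \, \dens(\vG)\,\delta^{\ts\star}_{\widecheck{g},\vG^{*}_{\vphantom{t}}}(f),
\]
where the PSF~\eqref{eq:PSF-2} enters in the third equality. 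Since $\widecheck{g}\in\cS(\RR^m)$, the right-hand side is itself a translation-bounded measure, by the argument of~(i) applied with $\widecheck{g}$ and $\vG^*$ in place of $g$ and $\vG$. Fact~\ref{fact:transformable} therefore yields strict transformability of $\delta^{\ts\star}_{\nts g,\vG}$, with the distributional and strict transforms agreeing, which closes the argument.
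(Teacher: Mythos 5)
Your proposal is correct and follows essentially the same route as the paper: absolute summability from the translation boundedness of $\delta^{}_{\vG}$ combined with the Schwartz decay of $g$ gives the (translation-bounded) measure property, and the transform is computed from Fact~\ref{fact:tensor} together with the PSF \eqref{eq:PSF-2} exactly as in the paper's proof. The only difference is cosmetic: you make the translation-boundedness estimate self-contained via a unit-cube decomposition of $\RR^m$, where the paper instead cites a convolution bound from \cite{ST}, and your closing observation that $\pi^{}_{1}(\vG)$ need not be discrete correctly pinpoints why the measure property is the nontrivial part.
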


\begin{proof}
  If $\vG\subset\RR^{d+m}$ is a lattice and $h \in \cS(\RR^{m+d})$,
  one has $ \sum_{z \in \vG} \lvert h(z)\rvert < \infty $ by standard
  arguments \cite{Schw}, hence
\begin{equation}\label{eq:something}
   \sum_{(x,y) \in \vG} \lvert f(x)\, g(y) \rvert \, < \, \infty
\end{equation}
holds for all $f \in \cS(\RR^d)$ and $g \in \cS(\RR^m)$.
Consequently, also for all $\varphi \in \Cc(\RR^d)$, we have
\[
  \sum_{(x,y) \in \vG} \lvert \varphi(x) \, g(y) \rvert
  \, < \, \infty \ts .
\]
We can thus define $\mu \colon \Cc(\RR^d) \longrightarrow \CC$ by
\[
  \mu(\varphi) \: =  \sum_{(x,y) \in \vG} g(y)
  \,  \delta_x (\varphi) \: = \sum_{(x,y) \in \vG}
  \varphi(x) \, g(y) \ts ,
\]
with the sum being absolutely convergent. Moreover, by
Eq.~\eqref{eq:something}, $\mu(f)$ is well defined for all
$f \in \cS (\RR^d)$ and satisfies
$ \mu(f) = \delta^{\ts\star}_{\nts g,\vG} (f)$.

Now, we show that $\mu$ is also a measure, which implies that
$\delta^{\ts\star_{}}_{\nts g,\vG} $ is a measure as well. It is
obvious that $\mu$ is a linear mapping.  Next, let $K \subset \RR^d$
be a fixed compact set, select $f \in \Cc^\infty(\RR^d)$ so that
$f \geqslant 1^{}_K$, and consider
\[
  C^{}_K \, \defeq \sum_{(x,y) \in \vG}
  \lvert f(x) \, g(y) \rvert \, < \, \infty \ts .
\]
Then, if $\varphi \in \Cc(\RR^d)$ satisfies
$\supp(\varphi)\subseteq K$, we get
\[
  \lvert \mu(\varphi) \vert \, = \, \biggl|
  \sum_{(x,y) \in \vG}  \varphi(x) \, g(y) \biggr| \, \leqslant
  \sum_{(x,y) \in \vG}  \lvert \varphi(x) \, g(y) \vert \,
  \leqslant  \, \| \varphi\|^{}_{\infty}\! \sum_{(x,y) \in \vG}
   \lvert f(x) \, g(y) \rvert
  \, \leqslant \, C^{}_K \, \| \varphi \|^{}_{\infty}
\]
which shows that $\mu$ is a measure.

Next, by applying the proof of \cite[Cor.~2.1]{ST} to
$h=f \nts\otimes g$, we see that there is a constant $C$ such that
$\bigl( \lvert \delta_{\nts\vG} \rvert * \lvert I\nts .h \rvert \bigr) (x)
\leqslant C$ holds for all $x \in \RR^{d+m}$.  In particular, for all
$t \in \RR^d$, we have
\[
\begin{split}
  \lvert \mu \rvert (t+K) \, & = \sum_{\substack{(x,y) \in \vG \\ x
      \in t+K}} \lvert g(y) \rvert
  \, \leqslant  \sum_{(x,y) \in \vG } \vert f(x-t) \, g(y) \rvert \\[2mm]
  & = \sum_{(x,y) \in \vG} \vert h (x-t,y) \rvert \, = \, \bigl(\lvert
  \delta^{}_{\nts \vG} \rvert * \lvert I \nts .h \rvert \bigr) (t,0)
  \, \leqslant \, C ,
\end{split}
\]
which shows that $\mu$ is translation bounded.

The verification of the last claim is similar to an argument from
\cite{RS}. Indeed, the PSF implies that, for all $f\in \cS(\RR^d)$, we
have
\[
  \widehat{\delta^{\ts\star_{}}_{\nts g,\vG}}(f) \, = \,
  \delta^{\ts\star}_{g,\vG} \bigl( \widehat{f}\, \bigr) \, = \,
  \delta^{}_{\nts\vG} \bigl( \widehat{f} \nts\otimes g \bigr) \, = \,
  \dens(\vG) \, \delta^{}_{\nts\vG^{*}_{}} \bigl( f \nts \otimes
  \widecheck{g} \ts \bigr) \, = \, \dens(\vG)\,
  \delta^{\ts\star}_{\widecheck{g},\vG^{*}_{}}(f)
\]
with the last distribution being a measure by the first part of the
proof.
\end{proof}

Now, we can proceed as follows. Let $\lambda$ be any fixed fourth root
of unity, and select a Schwartz function $g \in \cS (\RR)$ such that
$\widehat{g} = \lambda \ts g$, which we know to exist via the Hermite
functions. Clearly, this implies
$\widecheck{g} = \overline{\lambda} \ts g$, because we have
$\widehat{\nts\widehat{g}\ts} = g \circ I$ together with
$\lambda^4=1$.

Next, fix a parameter $0\leqslant \theta < \frac{\pi}{2}$ and consider
the lattice $\vG^{}_{\theta}$ in $\RR^2 = \RR \times \RR $. Let
$f\nts\otimes g$ refer to the product function with respect to this
(Cartesian) splitting, so
\[
  f \nts\otimes g \, (z) \, = \, f \nts\otimes g \,
  (m^{}_{z} u^{}_{\theta} + n^{}_{z} v^{}_{\theta}) \, = \,
  f \bigl(m^{}_{z} \cos (\theta) - n^{}_{z} \sin (\theta)\bigr)
  \cdot g \bigl(m^{}_{z} \sin (\theta) + n^{}_{z} \cos (\theta)\bigr),
\]
which elucidates the role of the angle $\theta$.  Now, for a fixed
lattice $\vG^{}_{\theta}$, we consider the translation-bounded (and
hence tempered) measure $\omega^{}_{g}$ on $\RR$ defined by
\begin{equation}\label{eq:def-om}
  \omega^{}_{g} ( \varphi) \, \defeq
  \sum_{z\in \vG^{}_{\theta}} \varphi\otimes\nts g \, (z)
  \, = \, \delta^{\ts\star_{}}_{\nts g,\vG^{}_{\theta}} (\varphi) \ts ,
\end{equation}
where $\varphi$ is an arbitrary Schwartz function. In fact,
\eqref{eq:def-om}
is well-defined for $\varphi\in C_{\mathsf{c}} (\RR)$, too.
Then, the (distributional) transform {gives}
\[
\begin{split}
   \widehat{\omega^{}_{g}} (\varphi) \, & = \, \omega^{}_{g}
   (\ts\widehat{\varphi}\ts ) \, = \sum_{z \in \vG^{}_{\theta}} \!
   \widehat{\varphi} \otimes\nts g \, (z) \, =
   \sum_{z \in \vG^{}_{\theta}}
   \widehat{\nts\varphi \otimes \nts \widecheck{g}\ts} \, (z)\\[2mm]
    & = \, \sum_{z \in \vG^{}_{\theta}}  \varphi \otimes
   \nts \widecheck{g} \, (z)
   \, = \, \overline{\lambda} \sum_{z \in \vG^{}_{\theta}}
   \varphi \otimes \nts g \, (z) \, = \, \overline{\lambda} \,
   \omega^{}_{g} (\varphi) \ts .
\end{split}
\]
Here, we have used Fact~\ref{fact:tensor} for $p=q=1$ in the upper
line, while the first equality in the second line follows from the PSF
for the self-dual lattice $\vG^{}_{\theta}$.  Since $\varphi$ is
arbitrary, this implies the relation
$\widehat{\omega^{}_{g}} = \overline{\lambda} \,
\omega^{}_{g}$. Invoking \cite[Lemma~5.2 and Thm.~5.2]{Nicu2}, we see
that the measure $\omega^{}_{g}$ from \eqref{eq:def-om} is translation
bounded and that
$\widehat{\omega^{}_{g}} = \overline{\lambda} \, \omega^{}_{g}$ also
holds in the strict sense.  Consequently, we can construct
eigenmeasures of $\cF$ for any fourth root of unity this way. Let us
summarise this derivation as follows.

\begin{theorem}\label{thm:aper-meas}
  Let\/ $\vG^{}_{\theta}$ be the self-dual, planar lattice defined by
  the basis matrix\/ $B$ from \eqref{eq:def-B}, and let\/ $g \ne 0$ be
  a Schwartz function with\/ $\widehat{g} = \lambda \ts g$ for some\/
  $\lambda \in \{1, \ii, -1, -\ii \}$. Then, the tempered measure\/
  $\omega^{}_{g}$ defined by \eqref{eq:def-om} is a\/ $($strict$\, )$
  eigenmeasure of\/ $\cF\!$, with\/
  $\widehat{\omega^{}_{g}} = \overline{\lambda} \, \omega^{}_{g}$.
  \qed
\end{theorem}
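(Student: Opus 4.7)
My plan is to assemble the theorem from the machinery already in place, with the main work being to reduce the statement to an application of Proposition~\ref{prop:A} on the self-dual lattice $\vG^{}_{\theta}\subset\RR^{2}=\RR\times\RR$.

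First, I would observe that, taking $d=m=1$ and $\vG=\vG^{}_{\theta}$ in the definition \eqref{eq:def-kamm}, the measure $\omega^{}_{g}$ from \eqref{eq:def-om} is literally the measure $\delta^{\ts\star}_{\nts g,\vG^{}_{\theta}}$. Thus Proposition~\ref{prop:A} immediately yields that $\omega^{}_{g}$ is a translation-bounded, transformable Radon measure on $\RR$, with transform
\[
   \widehat{\omega^{}_{g}} \, = \, \dens (\vG^{}_{\theta}) \,
   \delta^{\ts\star}_{\widecheck{g},\vG^{*}_{\theta}} \ts .
\]
Since $\vG^{}_{\theta}$ is self-dual with $\dens(\vG^{}_{\theta})=1$, this simplifies to $\widehat{\omega^{}_{g}}=\delta^{\ts\star}_{\widecheck{g},\vG^{}_{\theta}}$.

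Next, I would convert the eigenequation for $g$ on $\cS(\RR)$ into the desired eigenequation on $\RR$. From $\widehat{g}=\lambda\ts g$ together with $\cF^{2}(g)=g\circ I$ and $\lambda^{4}=1$, a one-line calculation gives $\widecheck{g}(y)=\widehat{g}(-y)=\lambda\ts(g\circ I)=\lambda^{3}g=\overline{\lambda}\,g$. Substituting this into $\delta^{\ts\star}_{\widecheck{g},\vG^{}_{\theta}}$ and using linearity of the construction $g\mapsto\delta^{\ts\star}_{\nts g,\vG^{}_{\theta}}$ in $g$, I obtain $\widehat{\omega^{}_{g}}=\overline{\lambda}\,\delta^{\ts\star}_{\nts g,\vG^{}_{\theta}}=\overline{\lambda}\,\omega^{}_{g}$, which is the claimed distributional eigenrelation.

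Finally, for the parenthetical strictness claim, I would invoke Fact~\ref{fact:transformable}: $\omega^{}_{g}$ is tempered and transformable as a tempered distribution, and its transform $\overline{\lambda}\,\omega^{}_{g}$ is translation bounded (since $\omega^{}_{g}$ is), so $\omega^{}_{g}$ is strictly transformable and both notions of transform agree. Hence $\widehat{\omega^{}_{g}}=\overline{\lambda}\,\omega^{}_{g}$ also holds as an equation of measures, confirming that $\omega^{}_{g}$ is a strict eigenmeasure. The only real subtlety in the whole argument is the sign/complex-conjugation bookkeeping when switching from $\widehat{g}$ to $\widecheck{g}$; everything else is a direct appeal to Proposition~\ref{prop:A} and Fact~\ref{fact:transformable}, so I do not anticipate a significant obstacle.
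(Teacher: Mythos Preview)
Your proposal is correct and follows essentially the same route as the paper: the paper's argument (given in the text immediately preceding the theorem) performs the PSF computation on $\vG^{}_{\theta}$ by hand, whereas you invoke Proposition~\ref{prop:A}, whose proof contains exactly that computation in general form. The only minor difference is in the strictness step, where the paper cites \cite[Lemma~5.2 and Thm.~5.2]{Nicu2} while you use Fact~\ref{fact:transformable}; both are valid and lead to the same conclusion.
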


\begin{remark}
  As is clear from \cite[Cor.~VII.2.6]{SW}, this approach also works
  when $g$ is continuous and decays such that
  $\varphi \otimes \nts g \, (z) = \cO \bigl( (1 + \lvert z
  \rvert)^{-2-\epsilon} \bigr)$ as $\lvert z \rvert \to\infty$, for some
  $\epsilon > 0$.
\exend
\end{remark}

Depending on the parameter $\theta$, the lattice $\vG^{}_{\theta}$ may
be in rational or irrational position relative to the horizontal
line. The rational case means $\tan (\theta ) = \frac{p}{q}$ with
integers $0 \leqslant p \leqslant q$ that can be chosen coprime, with
$q=1$ when $p=0$. Rationality implies
$\sin (\theta )^2 = p^2/ (p^2 + q^2)$ and
$\cos (\theta )^2 = q^2 / (p^2 + q^2)$.  In any such case, the
intersection of $\vG^{}_{\theta}$ with the horizontal line is a
one-dimensional lattice. Observing that
$q \sin (\theta ) - p \cos (\theta ) = 0$, this lattice is
$\alpha \ZZ$, with
\[
  \alpha \, = \, \frac{q \cos (\theta ) + p \sin (\theta )}{q}
  \, = \, \frac{\sqrt{p^2 + q^2}}{q}
\]
when $q \neq 0$, and $\alpha=1$ when $q=0$.  Consequently,
$\omega^{}_{g}$ is $\alpha$-periodic in this case, and we are back to
the class discussed in Section~\ref{sec:cryst}.

\begin{example}\label{ex:0}
  When $\theta = 0$, we get $\vG^{}_{0} = \ZZ^2$, and our Radon
  measure simplifies to $\omega^{}_{g} = c \, \delta^{}_{\ZZ}$, with
  $c = \sum_{m\in\ZZ} g(m)$. Since this measure is $1$-periodic, it is
  either an eigenmeasure for eigenvalue $1$, or it must be trivial. If
  $g$ was chosen as a Hermite function, $g = h_n$ say, we thus get an
  extra condition for any $n \not\equiv 0 \bmod{4}$, namely
\begin{equation}\label{eq:sum-rule}
    \sum_{k\in\ZZ} h_n ( k) \, = \, 0 \ts .
\end{equation}
This is clear by the anti-symmetry of $h_n$ for all odd $n$, while it
looks a little less obvious for $n \equiv 2 \bmod 4$.  However, this
sum rule follows directly from the classic PSF for functions, via
\[
    \sum_{k\in\ZZ} h_n (k) \, = \sum_{k\in\ZZ} \widehat{h_n} (k)
    \, = \, (-\ii)^n \sum_{k\in\ZZ} h_n (k) \ts ,
\]
which implies \eqref{eq:sum-rule} for any $n\not\equiv 0 \bmod{4}$.
\exend
\end{example}

\begin{example}\label{ex:next}
  Consider $p=q=1$, so $\theta = \frac{\pi}{4}$. Here, the
  intersection of the lattice $\vG^{}_{\pi/4}$ with the horizontal
  line is $\sqrt{2} \ts \ZZ$ and the measure becomes
  $\omega^{}_{g} = c^{}_{0} \ts \delta^{}_{\nts\sqrt{2} \ts \ZZ} +
  c^{}_{1} \ts \delta^{}_{\nts\beta + \sqrt{2} \ts \ZZ}$, with
  $\beta = 1/\sqrt{2}$ and coefficients
\[
  c^{}_{0} \, = \sum_{m\in\ZZ} g \bigl( m \sqrt{2} \, \bigr)
  \quad \text{and} \quad
  c^{}_{1} \, = \sum_{m\in\ZZ} g \bigl( m \sqrt{2} + \beta \bigr) .
\]
Since we have $\alpha\ts$-periodicity with $\alpha^2 = 2$, there are
eigenmeasures for $\lambda = \pm\ts 1$. When $\widehat{g} = \pm g$,
this is only consistent if the coefficients satisfy
$c^{}_{0} / c^{}_{1} = 1 \pm \sqrt{2}$, in line with
Example~\ref{ex:DFT-1}. This covers the cases of $g = h_n$ with $n$
even. For odd $n$, the measure $\omega^{}_{g}$ must be trivial, hence
$c^{}_{0} = c^{}_{1} = 0$, which is clear from the anti-symmetry of
the Hermite functions in this case.

More generally, extending this observation to other rational values of
$\theta$, one obtains further conditions from the structure of the
eigenvectors of the DFT.  \exend
\end{example}

When $\tan(\theta)$ is \emph{irrational}, we obtain aperiodic
extensions, which can be understood by viewing the setting as a CPS of
the form $(\RR, \RR, \vG^{}_{\theta})$; see \cite{TAO1} for more on
this concept, and \cite{Mey72,Moo97,Moo00} for general background.
The CPS here can be summarised in the diagram
\begin{equation}\label{eq:CPS}
\renewcommand{\arraystretch}{1.2}\begin{array}{r@{}ccccc@{}l}
   \\  & \RR & \xleftarrow{\;\;\; \pi \;\;\; }
   & \RR \nts\nts \times \nts\nts \RR &
   \xrightarrow{\;\: \pi^{}_{\text{int}} \;\: } & \RR & \\
   & \cup & & \cup & & \cup & \hspace*{-1.5ex}
   \raisebox{1pt}{\text{\footnotesize dense}} \\
   & \pi (\cL) & \xleftarrow{\;\ts 1-1 \;\ts } &
     \cL = \vG^{}_{\theta} &
     \xrightarrow{ \qquad } &\pi^{}_{\text{int}} (\cL) & \\
   & \| & & & & \| & \\
   & L & \multicolumn{3}{c}{\xrightarrow{\qquad\quad\quad
    \,\,\,\star\!\! \qquad\quad\qquad}}
   &  {L_{}}^{\star\nts}  & \\ \\
\end{array}\renewcommand{\arraystretch}{1}
\end{equation}
where $\star$ is the star map of the CPS. Clearly, this diagram is not
restricted to the case $\cL=\vG^{}_{\theta}$, and $\cL$ can be any
lattice in $\RR^2$ subject to the conditions encoded in the CPS.
\smallskip

Combining Theorem~\ref{thm:aper-meas} and
Fact~\ref{fact:transformable}, we can reformulate our previous result
as follows.

\begin{coro}\label{coro:aper}
  Let\/ $(\RR, \RR, \cL)$ be a CPS and let\/ $g \in \cS(\RR)$ be
  arbitrary, but fixed. If the lattice\/ $\cL$ is self-dual and if\/
  $\widecheck{g} = \lambda g$, then necessarily with\/
  $\lambda\in\{ 1, \ii, -1, -\ii \}$, the tempered measure\/
  $\omega^{}_g=\delta^{\,\star}_{\nts g,\cL}$ is an eigenmeasure of\/
  $\cF$ with eigenvalue\/ $\lambda$, also in the strict sense.  \qed
\end{coro}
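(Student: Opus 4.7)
The plan is to observe that this corollary is essentially a repackaging of Proposition~\ref{prop:A} once self-duality of $\cL$ has been exploited, so the work is largely book-keeping rather than new analysis. First, I would verify the elementary fact that self-duality in $\RR^{1+1}$ forces $\dens(\cL) = 1$: from the duality relation $\dens(\cL^{*}) = 1/\dens(\cL)$ together with the hypothesis $\cL = \cL^{*}$, one obtains $\dens(\cL)^2 = 1$, hence $\dens(\cL) = 1$. This is the only place where the self-duality hypothesis enters in any substantial way.

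With $\cL = \cL^{*}$ and $\dens(\cL) = 1$ in hand, Proposition~\ref{prop:A} applied to $\omega^{}_g = \delta^{\,\star}_{g,\cL}$ gives, in one line,
\[
  \widehat{\omega^{}_g} \, = \, \widehat{\delta^{\,\star}_{g,\cL}}
  \, = \, \dens(\cL)\, \delta^{\,\star}_{\widecheck{g},\cL^{*}}
  \, = \, \delta^{\,\star}_{\widecheck{g},\cL} \ts .
\]
Substituting the eigenvalue relation $\widecheck{g} = \lambda g$ and invoking the obvious linearity of the map $g \mapsto \delta^{\,\star}_{g,\cL}$ then yields the eigenmeasure equation for $\omega^{}_g$ with the claimed eigenvalue; the necessary identity $\lambda \in \{1,\ii,-1,-\ii\}$ is obtained by iterating $\widecheck{g} = \lambda g$ four times and using $\widehat{\widecheck{g}} = g$ together with $\widehat{\,\widehat{g}\,} = g \circ I$, which forces $\lambda^4 = 1$ once $g \ne 0$ is invoked.

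To upgrade the conclusion to the strict sense, I would cite Fact~\ref{fact:transformable}: Proposition~\ref{prop:A} already guarantees that $\omega^{}_g$ is translation bounded and tempered, and its distributional transform is a scalar multiple of $\omega^{}_g$ and therefore also translation bounded, so the distributional and strict transforms automatically coincide.

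There is essentially no hard step here; the main subtlety, and the one I would double-check carefully, is the tracking of $\lambda$ versus $\overline{\lambda}$ (equivalently, $\widehat{g}$ versus $\widecheck{g}$) through the computation, since Proposition~\ref{prop:A} naturally produces $\widecheck{g}$ in the transform while the hypothesis is phrased in terms of $\widecheck{g}$. Beyond this cosmetic point, the corollary is just the CPS-language translation of Theorem~\ref{thm:aper-meas}, which was proved for the specific planar square lattices $\vG^{}_\theta$ but in fact used nothing beyond self-duality of the underlying lattice, the product structure of the test function, and Fact~\ref{fact:tensor}.
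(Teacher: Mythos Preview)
Your approach is correct and essentially identical to the paper's, which simply records the corollary as a reformulation obtained by combining Theorem~\ref{thm:aper-meas} with Fact~\ref{fact:transformable}; your direct appeal to Proposition~\ref{prop:A} is, if anything, the cleaner route. The subtlety you flag is real: your displayed chain together with $\widecheck{g}=\lambda g$ gives $\widehat{\omega_g}=\lambda\,\omega_g$, not $\overline{\lambda}\,\omega_g$, so the $\overline{\lambda}$ in the corollary's statement appears to be a slip (Theorem~\ref{thm:aper-meas} has hypothesis $\widehat{g}=\lambda g$ rather than $\widecheck{g}=\lambda g$, and swapping the two flips the conjugate).
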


\smallskip

This result has a nice consequence as follows, still within the CPS
\eqref{eq:CPS}. If $g\in L^{2} (\RR)$ is continuous, the measure
$\omega^{}_{g}$ is well defined. Now, expanding $g$ as a series in
Hermite functions (in the sense explained in
Section~\ref{sec:prelim}), $\omega^{}_{g}$ can be written as a series
in eigenmeasures.  This has the potential to give a new tool for
studying the diffraction of weighted Dirac combs with Meyer set
support, which could potentially work also beyond regular model sets
or weak model sets of maximal density.

\begin{remark}
  Studying the proofs of Lemmas 5.2 and 5.3 in \cite{Nicu2} with care,
  one can see that various of our previous arguments remain true for
  $g \in \Cz (\RR)$, as long as $g$ asymptotically satisfies
  $g (x) = \cO \bigl( (1+\lvert x \rvert )^{2+\epsilon} \bigr)$ for
  some $\epsilon > 0$ as $\lvert x \rvert \to \infty$, while
  Proposition~\ref{prop:A} still holds if $g \in \Cz(\RR)$ has the
  property that both $g$ and $\widecheck{g}$ satisfy such an
  asymptotic behaviour.  \exend
\end{remark}

\section{Eigenmeasures with uniformly discrete
support}\label{sec:discrete}

Let us {next} characterise eigenmeasures on $\RR$ with uniformly
discrete support.  Note that each tempered measure with uniformly
discrete support is strongly tempered \cite[Thm.~4.4]{BS}, which
allows us to use the results of \cite{LO1}. We recall
Eq.~\eqref{eq:def-Z} and begin with another consequence of the cycle
structure induced by $\cF^4 = \id$.

\begin{lemma}\label{lem:cycle}
  Let\/ $r,s \in \RR$ and\/ $n\in\NN$ be fixed, and let\/ $\lambda$
  be a fourth root of unity. Then,
\[
  {\cY^{}_{r,s,\sqrt{n},\lambda} \, \defeq \,
  P^{}_{\lambda} (\cZ^{}_{r,s,\sqrt{n}} )  \, = \,
   \frac{1}{4} \bigl( \id \ts +  \lambda^{-1} \cF +
    \lambda^{-2} \cF^{2} + \lambda^{-3}
    \cF^{3} \bigr) \cZ^{}_{r,s,\sqrt{n}}}
\]
is either trivial or an eigenmeasure of\/ $\cF$ for the eigenvalue\/
$\lambda$, also in the strict sense.  Further, it is supported
inside\/ $\sqrt{n} \ts \ZZ +F$ for some finite set\/ $F \subset
\RR$. In particular, the support of the measure\/
$\cY^{}_{r,s,\sqrt{n},\lambda} $ is uniformly discrete.
\end{lemma}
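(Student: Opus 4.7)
The plan is to build everything on the explicit transform formulas for the measures $\cZ^{}_{r,s,\sqrt{n}}$ recorded in Lemma~\ref{lem:Z}, then invoke the cycle identity \eqref{eq:F-cycle} together with Fact~\ref{fact:transformable} (or Lemma~\ref{lem:tb}) to upgrade from distributional to strict transformability.

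\textbf{Step 1 (eigenmeasure property).} By Lemma~\ref{lem:Z}, the measure $\cZ^{}_{r,s,\sqrt{n}}$ is a translation-bounded, tempered, (strictly) transformable measure, hence by Fact~\ref{fact:forever} it is multiply transformable. Thus $\cY^{}_{r,s,\sqrt{n},\lambda}$ is a well-defined tempered distribution, and applying \eqref{eq:F-cycle} with $\nu = \cZ^{}_{r,s,\sqrt{n}}$ gives $\cF(\cY^{}_{r,s,\sqrt{n},\lambda}) = \lambda\, \cY^{}_{r,s,\sqrt{n},\lambda}$. Hence $\cY^{}_{r,s,\sqrt{n},\lambda}$ is either $0$ or an eigenmeasure of $\cF$ with eigenvalue $\lambda$ in the distributional sense.

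\textbf{Step 2 (support is contained in $\sqrt{n}\ZZ + F$).} Parts~(3) and (4) of Lemma~\ref{lem:Z} give the four explicit supports
\[
   \supp(\cZ^{}_{r,s,\sqrt{n}}) \subseteq s + \sqrt{n}\ZZ \ts , \qquad
   \supp(\cF \cZ^{}_{r,s,\sqrt{n}}) \subseteq r + \tfrac{1}{\sqrt{n}}\ZZ \ts ,
\]
and symmetrically $\supp(\cF^2\cZ^{}_{r,s,\sqrt{n}}) \subseteq -s + \sqrt{n}\ZZ$ and $\supp(\cF^3\cZ^{}_{r,s,\sqrt{n}}) \subseteq -r + \tfrac{1}{\sqrt{n}}\ZZ$. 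Since $\sqrt{n}\ZZ$ is a sublattice of $\tfrac{1}{\sqrt{n}}\ZZ$ of index $n$, a coset decomposition gives $\tfrac{1}{\sqrt{n}}\ZZ = \sqrt{n}\ZZ + \bigl\{\tfrac{k}{\sqrt{n}} : 0 \leqslant k < n\bigr\}$, so each of the four supports lies in $\sqrt{n}\ZZ + F$ for the finite set
\[
   F \, = \, \{s,-s\} \,\cup\, \bigl\{\ts \pm r + \tfrac{k}{\sqrt{n}} : 0 \leqslant k < n \bigr\} \ts .
\]
Therefore $\supp(\cY^{}_{r,s,\sqrt{n},\lambda}) \subseteq \sqrt{n}\ZZ + F$.

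\textbf{Step 3 (uniform discreteness and strict transformability).} The set $\sqrt{n}\ZZ + F$ is a finite union of translates of the lattice $\sqrt{n}\ZZ$; since $\sqrt{n}\ZZ - \sqrt{n}\ZZ = \sqrt{n}\ZZ$ is itself uniformly discrete, a short argument (if $|x_n - y_n| \to 0$ with $x_n = a_n + f, y_n = b_n + g$ along a subsequence with fixed $f,g\in F$, then $a_n - b_n$ converges in $\sqrt{n}\ZZ$, hence is eventually constant, forcing $x_n = y_n$) shows $\sqrt{n}\ZZ + F$ is uniformly discrete. In particular $\cY^{}_{r,s,\sqrt{n},\lambda}$ is a tempered pure point measure with uniformly discrete support, so Lemma~\ref{lem:tb} promotes the distributional identity from Step~1 to strict transformability with $\cF(\cY^{}_{r,s,\sqrt{n},\lambda}) = \lambda\, \cY^{}_{r,s,\sqrt{n},\lambda}$ in the sense of measures. (Alternatively, each of the four summands is translation bounded by Lemma~\ref{lem:Z}, so $\cY^{}_{r,s,\sqrt{n},\lambda}$ is translation bounded and strict transformability follows from Fact~\ref{fact:transformable}.)

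There is no real obstacle here; the whole proof is a bookkeeping exercise once one recognises that Lemma~\ref{lem:Z} tracks the orbit of $\cZ^{}_{r,s,\sqrt{n}}$ under $\cF$ through lattices commensurate with $\sqrt{n}\ZZ$. The only mildly delicate point is the last (uniform discreteness of a finite union of lattice translates), which is standard but worth spelling out.
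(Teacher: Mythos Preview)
Your proof is correct and follows essentially the same route as the paper: both invoke the cycle identity behind Proposition~\ref{prop:general} for the eigenmeasure property, then use Lemma~\ref{lem:Z} to track the four iterates and land all supports in $\sqrt{n}\,\ZZ + F$ with the same finite set $F$. The only cosmetic difference is that the paper first rewrites the two terms with parameter $1/\sqrt{n}$ as sums of $\cZ$-measures with parameter $\sqrt{n}$ via the coset decomposition in Lemma~\ref{lem:Z}, whereas you apply that coset decomposition directly at the level of supports; you are also more explicit than the paper about the strict-sense claim and the uniform discreteness of $\sqrt{n}\,\ZZ + F$.
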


\begin{proof}
The eigenmeasure property is clear from
Theorem~\ref{thm:decomp-eigenspaces}.
{Next, set $\theta = \sqrt{n}$. Then, by Lemma~\ref{lem:Z},
we have
\[
\begin{split}
  \cY^{}_{r,s,\theta,\lambda} \, & = \,
    \frac{1}{4} \Bigl( \cZ^{}_{r,s,\theta}  +
   \myfrac{\ee^{2\pi\ii r s}}{\lambda \theta} \cZ^{}_{-s,r,\frac{1}{\theta}}
   +  \lambda^{-2} \cZ^{}_{-r,-s,\theta} +
    \myfrac{\ee^{2\pi\ii r s}}{\lambda^3 \theta}
  \cZ^{}_{s,-r,\frac{1}{\theta}} \Bigr) \\[2mm]
  & = \,   \myfrac{1}{4}\cZ^{}_{r,s,\theta}  +
  \myfrac{1}{4 \lambda^{2}} \cZ^{}_{-r,-s,\theta}
  +  \myfrac{\ee^{2\pi\ii r s}}{4 \lambda \theta} \sum_{m=0}^{n-1}
  \bigl( \cZ^{}_{-s,r+\frac{m}{\theta},\theta} + \myfrac{1}{\lambda^{2}}
  \cZ^{}_{s,-r + \frac{m}{\theta},\theta} \bigr).
\end{split}
\]
This implies}
$\supp \bigl( \cY^{}_{r,s,\sqrt{n},\lambda} \bigr) \subseteq \sqrt{n}
\ts \ZZ + \big\{ \pm {\! s}, \pm \ts\ts r + \frac{m}{\sqrt{n}} : 0 \leqslant m
\leqslant n-1 \big\}$, which clearly is a uniformly discrete subset of
$\RR$.
\end{proof}

For fixed $n\in\NN$ and $\lambda\in \{ 1, \ii, -1, -\ii \}$, we now
define a space of tempered measures via
\begin{equation}\label{eq:def-E}
  \cE^{}_{\lambda} (n) \, \defeq \,
  \text{span}^{}_{\CC} \big\{ \cY^{}_{r,s,\sqrt{n},\lambda}
    : r,s \in \RR \big\} .
\end{equation}
Clearly, any element from $\cE^{}_{\lambda} (n)$ is an eigenmeasure of
$\cF$ with eigenvalue $\lambda$ and uniformly discrete
support.\smallskip

Let us return to the general class of tempered measures with uniformly
discrete support. If such a measure $\mu$ is an eigenmeasure of $\cF$,
the support of $\widehat{\mu}$ must be the same, so $\mu$ is doubly
sparse; see \cite{BST,LO1,LO2} and references therein for background
on such measures. This has a strong consequence as follows.

\begin{prop}\label{prop:sparse}
  Let\/ $\mu\ne 0$ be an eigenmeasure of\/ $\cF$ on\/ $\RR$ such
  that\/ $\supp (\mu) \subset \RR$ is uniformly discrete. Then, there
  exist integers\/ $n,k \in\NN$ such that, with\/ $\beta = \sqrt{n}$,
\[
  \mu \, = \sum_{i=1}^{k} c^{}_{i} \,
  \cZ^{}_{r^{}_{\nts i}, s^{}_{i}, \beta}
\]
   holds for suitable\/ $c^{}_{1}, \ldots , c^{}_{k}
   \in\CC$ and\/ $r^{}_{1}, \ldots, r^{}_{k}, s^{}_{1}, \ldots ,s^{}_{k}
   \in\RR$.
\end{prop}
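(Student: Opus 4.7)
The plan is to reduce the problem, via the rigidity theory of doubly sparse measures on $\RR$, to a decomposition into the modulated Dirac combs $\cZ^{}_{r,s,\sqrt{n}}$ from \eqref{eq:def-Z}, and then conclude with a DFT argument paralleling Section~\ref{sec:cryst}.

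First, since $\widehat{\mu}=\lambda\mu$ with $\lambda\ne 0$, the supports of $\mu$ and $\widehat{\mu}$ coincide and are uniformly discrete. By Lemma~\ref{lem:tb} this gives strict transformability, and by \cite[Lemma~6.3]{BST} translation boundedness. Hence $\mu$ is a crystalline measure, i.e., doubly sparse in the sense of \cite{BST,LO1,LO2}.

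Next, I would invoke the Lev--Olevskii-type rigidity theorem for crystalline measures on $\RR$: such a measure has its support contained in a finite union of translates of a single arithmetic progression $\beta\ZZ$. The same conclusion must hold for $\widehat{\mu}=\lambda\mu$ with the same step $\beta$, since the supports agree. Now Lemma~\ref{lem:Z}(3) shows that the natural step appearing under the Fourier transform of any $\cZ^{}_{r,s,\beta}$-component is $\tfrac{1}{\beta}$, so consistency forces the lattices $\beta\ZZ$ and $\tfrac{1}{\beta}\ZZ$ to be commensurable, whence $\beta^{2}\in\QQ$. Writing $\beta^{2}=p/q$ in lowest terms, the refinement formula at the end of Lemma~\ref{lem:Z} allows us to replace $\beta$ by the common step $\sqrt{n}$ with $n=pq$, giving $\supp(\mu)\subseteq\{s^{}_{1},\ldots,s^{}_{k}\}+\sqrt{n}\ts\ZZ$ for finitely many representatives.

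To conclude, I would decompose $\mu$ into contributions from each coset of $\sqrt{n}\ts\ZZ$ inside its support. The eigenvalue equation $\widehat{\mu}=\lambda\mu$ then reduces to a finite-dimensional linear constraint on the coefficient functions, analogous to the DFT-eigenvector condition of Theorem~\ref{thm:lattice} but applied cosetwise on the finite cyclic quotient $\tfrac{1}{\sqrt{n}}\ZZ/\sqrt{n}\ts\ZZ\cong C_{n}$. The solutions of this system are precisely finite linear combinations of characters $\chi^{}_{r^{}_{i}}$ restricted to the relevant cosets, yielding the representation $\mu=\sum_{i=1}^{k}c^{}_{i}\,\cZ^{}_{r^{}_{i},s^{}_{i},\sqrt{n}}$ as claimed.

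The main obstacle is the rigidity step. Without the Lev--Olevskii-type theorem, the coefficient sequences on the arithmetic progressions could a priori be arbitrary bounded, or almost periodic, sequences; the non-trivial point is that the \emph{simultaneous} uniform discreteness of $\supp(\mu)$ and $\supp(\widehat{\mu})$ forces finiteness in the coset representation and trigonometric-polynomial structure in each coset. Once this rigidity is in place, the remainder is essentially linear algebra over $C_{n}$ and closely follows the template developed in Section~\ref{sec:cryst}.
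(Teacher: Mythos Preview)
Your overall strategy matches the paper's: invoke the Lev--Olevskii structure theorem for doubly sparse measures, establish $\alpha^2\in\QQ$, then refine via the last part of Lemma~\ref{lem:Z}. However, your third paragraph introduces a detour that is both unnecessary and does not work as stated.

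The Lev--Olevskii theorem \cite[Thms.~1 and 3]{LO1} does not merely give a support statement; it gives $\mu=\sum_{j} P_j\,\delta^{}_{\alpha\ZZ+s_j}$ with the $P_j$ \emph{trigonometric polynomials}. Expanding these polynomials immediately yields $\mu=\sum_i c_i\,\cZ^{}_{r_i,s_i,\alpha}$, which is the desired form up to the refinement of $\alpha$ to $\sqrt{n}$. There is nothing left to prove about the coefficient structure, and in particular no DFT argument is needed: Proposition~\ref{prop:sparse} does not assert any eigenvector condition on the parameters $(r_i,s_i,c_i)$, only the existence of such a decomposition. Your proposed step of deducing the trigonometric-polynomial structure from the eigenvalue equation via a ``finite-dimensional linear constraint'' on $C_n$ cannot work: without already knowing the coefficients lie in a finite-dimensional space, the eigenvalue relation $\widehat{\mu}=\lambda\mu$ is an equation between tempered distributions, not a finite linear system. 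You acknowledge this in your final paragraph, but then the third paragraph should simply be dropped.

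On the commensurability step, your argument is in the right spirit but a bit loose. The paper, having the $\cZ$-decomposition in hand, computes $\supp(\widehat{\mu})\subseteq\tfrac{1}{\alpha}\ZZ+F_r$ via Lemma~\ref{lem:Z}(3), then uses \cite[Lemma~5.5.1]{RS} to obtain $\tfrac{1}{\alpha}\ZZ\subseteq\alpha\ZZ+F'$ for a finite $F'$, and finishes by pigeonhole: two points $m/\alpha,n/\alpha$ fall in the same coset, forcing $(n-m)/\alpha\in\alpha\ZZ$ and hence $\alpha^2\in\QQ$. Your appeal to ``consistency'' should be replaced by this explicit argument.
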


\begin{proof}
  Clearly, since $\supp (\widehat{\mu}) = \supp (\mu)$, the measure
  $\mu$ is doubly sparse, and an application of \cite[Thms.~1 and
  3]{LO1} guarantees the existence of an integer $\kappa\in\NN$ and
  some number $\alpha>0$ such that
  $\mu = \sum_{j=1}^{\kappa} P_j \, \delta^{}_{\alpha \ZZ + s_j}$,
  where the $P_j$ are trigonometric polynomials. Expanding the latter
  and rewriting $\mu$ in terms of our measures $\cZ^{}_{r,s,\alpha}$
  from \eqref{eq:def-Z} gives
\begin{equation}\label{eq:mu-inter}
  \mu \, = \sum_{i=1}^{\ell} c^{}_{i} \,
  \cZ^{}_{r^{}_{\nts i}, s^{}_{i}, \alpha}
\end{equation}
for some $\ell\in\NN$, which might be larger than $\kappa$, and
numbers $c^{}_{1}, \ldots , c^{}_{\ell}\in\CC$ together with
$r^{}_{1}, \ldots, r^{}_{\ell}, s^{}_{1}, \ldots ,s^{}_{\ell} \in\RR$,
which need not be distinct.

With Lemma~\ref{lem:Z}{\ts}(3), we see that
\[
    \supp (\mu) \, \subseteq \, \alpha \ZZ + F_s
    \quad \text{and} \quad
    \supp (\widehat{\mu}) \, \subseteq \,
    \tfrac{1}{\alpha} \ts \ZZ + F_r
\]
with the finite sets $F_s=\{ s^{}_{1}, \ldots , s^{}_{\ell} \}$ and
$F_r = \{ r^{}_{1} , \ldots , r^{}_{\ell} \}$. By
\cite[Lemma~5.5.1]{NS11}, there exists a finite set $G\subset \RR$
such that
$\frac{1}{\alpha} \ts \ZZ + F_r \subseteq \supp (\widehat{\mu}) + G$,
which implies
\[
   \tfrac{1}{\alpha} \ts \ZZ \, \subseteq \,
   \supp ( \widehat{\mu} ) + G - F_r \, = \,
   \supp (\mu) + G - F_r \, \subseteq \,
   \alpha \ZZ + F_s + G - F_r \, = \, \alpha \ZZ + F^{\ts \prime} ,
\]
where $F^{\ts\prime}$ is still a finite set. Consequently, there are
integers $m,n \in \NN$ with $m<n$ and some element
$t\in F^{\ts\prime}$ such that $\frac{m}{\alpha}$ and
$\frac{n}{\alpha}$ both lie in $\alpha \ZZ + t$. This implies
$\frac{n-m}{\alpha} \in \alpha \ZZ$, hence $0\ne n-m\in\alpha^2 \ZZ$,
and $\alpha^2 \in \QQ$.

Now, let $\alpha^2 = \frac{p}{q}$ with $p,q\in\NN$, and set
$\beta = \sqrt{p \ts q\ts }$, so $\beta = \alpha \ts q$ and
$\alpha = \beta/q$.  Invoking the last relation from
Lemma~\ref{lem:Z}, since $\beta^2 = p\ts q$ is an integer, we can
rewrite $\mu$ from \eqref{eq:mu-inter} as
\[
    \mu \, = \sum_{i=1}^{\ell} \, \sum_{j=0}^{q-1}
    c^{}_{i} \ts \cZ^{}_{r^{}_{\nts i}, s^{}_{i}  + j \alpha, \beta} \ts ,
\]
which, after expanding the double sum and relabelling the parameters,
proves the claim.
\end{proof}

Recall that Lemma~\ref{lem:Z}{\ts}(3), applied several times, leads to
the cycle
\[
  \cZ^{}_{r,s,\alpha} \, \xrightarrow{\,\cF\,} \,
  \tfrac{1}{\alpha}\ts \ee^{2 \pi \ii r s}  \cZ^{}_{-s,r,\frac{1}{\alpha}}
  \, \xrightarrow{\,\cF\,} \, \cZ^{}_{-r,-s,\alpha}
  \, \xrightarrow{\,\cF\,} \,
  \tfrac{1}{\alpha} \ts \ee^{2 \pi \ii r s} \cZ^{}_{s,-r,\frac{1}{\alpha}}
  \, \xrightarrow{\,\cF\,} \, \cZ^{}_{r,s,\alpha}
\]
of length at most $4$, where
$\cZ^{}_{-r,-s,\alpha} = I \nts . \ts \cZ^{}_{r,s,\alpha}$. Ignoring
the actual cycle length, which can be $1$, $2$ or $4$, we get the
following general result.

\begin{theorem}\label{thm:main}
  Let\/ $\mu\ne 0$ be a transformable measure on\/ $\RR$ and\/
  $\lambda$ a fourth root of unity. Then, the following properties are
  equivalent.
\begin{enumerate}\itemsep=2pt
\item The measure\/ $\mu$ is an eigenmeasure of\/ $\cF$ for\/
  $\lambda$ with uniformly discrete support, either in the
  distribution or in the strict sense.
\item There are integers\/ $n,k \in \NN$ and numbers\/
  $c^{}_{1}, \ldots , c^{}_{k} \in \CC$ and\/
  $r^{}_{1}, \ldots , r^{}_{k}, s^{}_{1}, \ldots , s^{}_{k} \in \RR$
  such that one has $\mu \, = \, P^{}_{\lambda} (\nu)$
  for\/ $\nu = \sum_{i=1}^{k} c^{}_{i} \, \cZ^{}_{r_i, s_i, \beta}$
  with\/ $\beta = \sqrt{n}$.
\item There is some\/ $n\in\NN$ such that\/
  $\mu\in \cE^{}_{\lambda} (n)$, as defined in Eq.~\eqref{eq:def-E}.
\end{enumerate}
\end{theorem}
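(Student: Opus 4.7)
The plan is to establish the cycle of implications
$(3) \Rightarrow (1) \Rightarrow (2) \Rightarrow (3)$. That (1) may be read
in either the distributional or the strict sense will be automatic here:
a tempered measure with uniformly discrete support is pure point, so
Lemma~\ref{lem:tb} applies.

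For $(3) \Rightarrow (1)$, any $\mu \in \cE^{}_{\lambda}(n)$ is by
definition a finite linear combination
$\mu = \sum_{i=1}^{k} d^{}_{i}\, \cY^{}_{r^{}_{i}, s^{}_{i}, \sqrt{n}, \lambda}$.
Lemma~\ref{lem:cycle} then guarantees that each summand is either trivial
or a strict eigenmeasure of $\cF$ for eigenvalue $\lambda$, with support
inside $\sqrt{n}\,\ZZ + F^{}_{i}$ for some finite $F^{}_{i} \subset \RR$.
Linearity of $\cF$ yields $\widehat{\mu} = \lambda\, \mu$, while
$\supp(\mu) \subseteq \sqrt{n}\,\ZZ + \bigcup_{i} F^{}_{i}$ is uniformly
discrete, being a finite union of translates of the lattice $\sqrt{n}\,\ZZ$.

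The heart of the argument is $(1) \Rightarrow (2)$, where the essential
input is Proposition~\ref{prop:sparse}. Applied to the eigenmeasure
$\mu$, it produces $n,k \in \NN$, scalars
$c^{}_{1}, \ldots, c^{}_{k} \in \CC$ and parameters
$r^{}_{i}, s^{}_{i} \in \RR$ such that
$\mu = \sum_{i=1}^{k} c^{}_{i}\, \cZ^{}_{r^{}_{i}, s^{}_{i}, \beta}$
with $\beta = \sqrt{n}$. Setting $\nu \defeq \mu/4$ gives a measure of
exactly the required form (with coefficients $c^{}_{i}/4$). Since
$\widehat{\mu} = \lambda\mu$ propagates via Fact~\ref{fact:forever} to
$\cF^{j}(\mu) = \lambda^{j}\mu$ for $j=0,1,2,3$, a direct computation,
already recorded in the proof of Proposition~\ref{prop:general},
gives
$\nu + \lambda^{-1}\widehat{\nu} + \lambda^{-2}\widehat{\widehat{\nu}}
+ \lambda^{-3}\widehat{\widehat{\widehat{\nu}}} = \mu$,
which is precisely the representation demanded in (2).

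For $(2) \Rightarrow (3)$, linearity of $\cF$ allows one to regroup the
expression from (2) according to the definition of $\cY$:
\[
   \mu \, = \sum_{i=1}^{k} c^{}_{i}\, \bigl( \id + \lambda^{-1} \cF
   + \lambda^{-2} \cF^{2} + \lambda^{-3} \cF^{3}\bigr)
   \cZ^{}_{r^{}_{i}, s^{}_{i}, \sqrt{n}}
   \, = \sum_{i=1}^{k} c^{}_{i}\, \cY^{}_{r^{}_{i}, s^{}_{i}, \sqrt{n}, \lambda}\ts ,
\]
so $\mu \in \cE^{}_{\lambda}(n)$ by Eq.~\eqref{eq:def-E}. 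The only
substantial obstacle along the way is Proposition~\ref{prop:sparse},
which itself rests on the Lev--Olevskii characterisation of doubly
sparse measures; once it is in place, the rest of the argument is
bookkeeping with the $4$-cycle structure induced by $\cF^{4} = \id$.
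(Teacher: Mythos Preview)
Your proof is correct and follows essentially the same route as the paper's own argument: the equivalence inside (1) via Lemma~\ref{lem:tb}, the key step $(1)\Rightarrow(2)$ via Proposition~\ref{prop:sparse} with $\nu=\mu/4$ exactly as in Proposition~\ref{prop:general}, the regrouping $(2)\Rightarrow(3)$ by the definition of $\cY$, and $(3)\Rightarrow(1)$ via Lemma~\ref{lem:cycle}. You only add a line making explicit why a finite sum of $\cY$'s still has uniformly discrete support, which the paper leaves implicit.
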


\begin{proof}
  By Lemma~\ref{lem:tb}, the two notions of transformability are
  equivalent in this case, as claimed under (1).

$(1) \Rightarrow (2)$: By Proposition~\ref{prop:sparse}, we
know that $\mu$ must be of the form
\[
   \mu \, = \sum_{i=1}^{k} \ts c^{}_{i} \, \cZ^{}_{r_i, s_i, \beta}
\]
for suitable integers $n,k$ and with suitable numbers $r^{}_{i}$,
$s^{}_{j}$ and $c^{}_{\ell}$. Then, since $\mu \in \EE_{\lambda}(X)$,
where $X$ is the space of measures that are transformable, either in
the distribution or in the strict sense, we have
$\mu=P^{}_{\lambda} (\mu)$ by Proposition~\ref{prop:plambda}.  The
claim follows by setting $\nu=\mu$. \vspace*{1mm}

$(2) \Rightarrow (3)$:  Since $\nu = \sum_{i=1}^{k} c^{}_{i}
\, \cZ^{}_{r_i, s_i, \sqrt{n}}$, we get
\[
  \mu \, = \, P^{}_{\lambda} (\nu) \, =
  \sum_{i=1}^{k} c^{}_{i} \, P^{}_{\lambda}
  \bigl( \cZ^{}_{r_i, s_i, \sqrt{n}} \bigr) \, =
  \sum_{i=1}^{k} c^{}_{i} \, \cY^{}_{r_i, s_i, \sqrt{n}, \lambda}
  \, \in \, \cE^{}_{\lambda} (n) \,.
\]

The implication $(3) \Rightarrow (1)$ is clear from
Lemma~\ref{lem:cycle}.
\end{proof}

For concrete calculations, {given an eigenmeasure $\mu$
  with uniformly discrete support, we can decompose $\mu$ into its
  even and odd part relative to $I$ by
\[
  \mu \, = \, \frac{ \mu + I \nts . \ts \mu }{2}
   + \frac{ \mu - I \nts . \ts \mu }{2} \, \eqdef \,
  \mu^{}_{+} \nts + \mu^{}_{-} \ts .
\]
Then, one derives the following conditions} for
$\widehat{\mu} = \lambda \mu$,
\begin{equation}\label{eq:conditions}
\begin{split}
  \lambda = \pm 1 : & \quad \mu^{}_{-} = 0
  \quad \text{and} \quad
  \widehat{\mu^{}_{+}} = \pm \ts \mu^{}_{+} \ts , \\
  \lambda = \pm\ts\ts \ii\ts : & \quad \mu^{}_{+} = 0
  \quad \text{and} \quad
  \widehat{\mu^{}_{-}} = \pm \ts \mu^{}_{-} \ts ,
\end{split}
\end{equation}
which formed the basis for Lemma~\ref{lem:cycle}.  It is clear from
Proposition~\ref{prop:sparse} in conjunction with
Proposition~\ref{prop:FD} that it suffices to consider measures
\begin{equation}\label{eq:mu-cand}
  \mu \, = \sum_{i=1}^{k} c^{}_{i}\ts
  \cZ^{}_{r^{}_{\nts i}, s^{}_{i}, \alpha}
\end{equation}
with fixed $\alpha=\sqrt{n}$ and parameters
$r^{}_{\nts i} \in \big( \frac{-1}{2\alpha}, \frac{1}{2\alpha} \big]
\eqdef J_r$ and
$s^{}_{i} \in \big( \frac{-\alpha}{2}, \frac{\alpha}{2} \big] \eqdef
J_s$.  Invoking Lemma~\ref{lem:Z}{\ts}(4), we get
$\mu = \mu^{}_{+} \nts + \mu^{}_{-}$ with
\[
  \mu^{}_{\pm} \, = \: \myfrac{1}{2} \sum_{i=1}^{k}
  c^{}_{i} \bigl( \cZ^{}_{r_i, s_i, \alpha}
        \pm \cZ^{}_{-r_i, -s_i, \alpha}\bigr),
\]
where $-r_i$ and $-s_i$ are taken modulo $\frac{1}{\alpha}$ and
$\alpha$, respectively, when $r_i = \frac{1}{2\alpha}$ or
$s_i = \frac{\alpha}{2}$. Now, one can use the fundamental domains to
rewrite the possible eigenmeasures with restricted parameters, and
without ambiguities. Some care has to be exercised with the
$2$-division points
\[
   \big\{ \bigl( 0,0 \bigr), \bigl( 0,\tfrac{\alpha}{2} \bigr),
   \bigl(\tfrac{1}{2\alpha},0 \bigr),
   \bigl(\tfrac{1}{2\alpha},\tfrac{\alpha}{2}\bigr) \big\}
   \,\subset \, J 
\]
under the inversion $I$ in such calculations, the details of which are
left to the interested reader.

\section{Eigenmeasures with locally finite support}

Following \cite{BST}, we say that a measure is \emph{sparse} if its
support is locally finite (that is, discrete and closed). We say that
$\mu$ is \emph{doubly sparse}\footnote{Meyer calls such a measure
  \emph{crystalline} \cite{Meyer}, {which we prefer to
    avoid because `crystals' are often connected with atomic
    arrangements in solids that are uniformly discrete.}} if $\mu$ is
a transformable tempered measure such that both $\mu$ and
$\widehat{\mu}$ are sparse. Finally, when $\mu$ is a doubly sparse
measure which is an eigenmeasure for $\cF$, we will simply call it a
\emph{doubly sparse eigenmeasure}.

Clearly, linear combinations of measures
$\mu^{}_i \in \cE^{}_{\lambda} (n^{}_{i})$, with fixed $\lambda$, are
eigenmeasures with discrete support. The latter is uniformly discrete
if and only if, for all $i\ne j$, one has $\sqrt{m_i/m_j} \in \QQ$,
which is to say that there are many more possibilities. In particular,
for each fourth root of unity, there are doubly sparse eigenmeasures
with non-uniformly discrete support, for instance of the type studied
by Meyer in \cite{Meyer,Meyer-17}.

Notice that, since the union of finitely many locally finite sets is
locally finite, the set $X$ of doubly sparse measures on $\RR^d$ is a
subspace of $\cM_{\text{ttm}}(\RR^d)$, which is by definition
$\cF\nts$-invariant. Therefore, Proposition~\ref{prop:plambda}{\ts}(4)
yields the following result.

\begin{theorem}\label{thm:sparse}
  Let\/ $\mu$ be a transformable tempered measure. Then, $\mu$ is a
  doubly sparse measure if and only if\/ $P^{}_{\lambda} (\mu)$ is a
  doubly sparse eigenmeasure for all\/
  $\lambda \in \{ \pm 1, \pm\ts \ii \}$.

  In particular, a measure is a doubly sparse measure if and only if
  it is a linear combination of doubly sparse eigenmeasures.  \qed
\end{theorem}

Next, we are going to construct doubly sparse eigenmeasures that are
supported inside a lattice, but vanish on arbitrarily large intervals.

Let us first fix $n\in \NN$ and
$\lambda \in \{ \pm 1, \pm \ts \ii \}$. Then, we know from the
multiplicities in Table~\ref{tab:eigen} (which is due to Gauss) that
the dimension $m = \dim (E_{\lambda})$ of the eigenspace $E_{\lambda}$
of $U_n$ satisfies
\[
      4 m + 4 \, \geqslant \, n \ts .
\]
Therefore, we get the following useful result.

\begin{fact}\label{fact:dim-eigen}
    For each\/ $n \in \NN$ and\/  $\lambda \in \{ \pm 1, \pm \ts \ii \}$,
    we have\/ $ \dim (E_{\lambda})  \geqslant
    \max \bigl( 0 \ts , \frac{n}{4} -1  \bigr)$.  \qed
\end{fact}

This gives us the following simple, but powerful lemma.

\begin{lemma}\label{lem:eigenmeasure-zero}
  Let\/ $12\leqslant n\in\NN$ and\/
  $\lambda\in \{ \pm 1 , \pm \ts \ii \}$ be fixed. Let\/ $k \in \NN$
  be chosen such that\/ $1 \leqslant k \leqslant \frac{n}{4}-2$. Then,
  for each\/
  $0 \leqslant j^{}_1 < j^{}_2 < \ldots < j^{}_k \leqslant n-1$, there
  exists some measure\/ $\mu$ that is\/ $\sqrt{n}$-periodic, supported
  inside\/ $\ZZ/\sqrt{n}$, and satisfies\/ $\widehat{\mu}=\lambda \mu$
  together with the vanishing condition\/
  $\mu \bigl( \big\{ j^{}_{\ell} \ts /\nts\sqrt{n} \ts \big\}
  \bigr)=0$ for all\/ $1 \leqslant \ell \leqslant k$.
\end{lemma}

\begin{proof}
Let $X$ be the space of measures that are $\sqrt{n}$-periodic,
supported inside $\ZZ/\sqrt{n}$, and let
\begin{align*}
  Y \,& :=\, \{ \mu \in X: \widehat{\mu}=\lambda \mu \} \ts ,  \\
  Z\, &:=\, \{ \mu \in X: \mu \bigl( \big\{ j^{}_{\ell} \ts/\nts
        \sqrt{n} \ts \big\} \bigr) \, = \, 0 \, \text{ for all }
        1 \leqslant \ell \leqslant k \} \,.
\end{align*}
By Theorem~\ref{thm:lattice}, we have
\[
   \dim(Y) \, = \, \dim(E_\lambda) \, \geqslant \,
    \myfrac{n}{4}-1 \, \geqslant \, k+1 \ts .
\]
On the other hand, we have $ \dim(Z)=n-k $, which shows that
\[
    \dim(Y)+\dim(Z) \,  > \, \dim(X) \ts ,
\]
and hence $Y \cap Z \neq \varnothing$. This implies our claim.
\end{proof}

When $j^{}_1 , j^{}_2 , \ldots , j^{}_k$ are
consecutive numbers, we get the following consequence.

\begin{coro}\label{coro:gap}
  Let\/ $12\leqslant n\in\NN$ and\/
  $\lambda\in \{ \pm 1 , \pm \ts \ii \}$ be fixed, and let\/ \/
  $1 \leqslant k \leqslant \frac{n}{4}-3$.  Then, for each\/
  $0 \leqslant j \leqslant n-k-1$, there exists a non-zero measure\/
  $\mu$ of the form given in Theorem~\textnormal{\ref{thm:lattice}}
  that satisfies\/ $\ts \widehat{\mu} = \lambda \mu$ together with\/
  $\supp (\mu) \subset \ZZ/\sqrt{n}$ and the vanishing condition\/
  $\mu \bigl( \big\{ m/\nts \sqrt{n} \ts \big\} \bigr)=0$ for all\/
  $j \leqslant m \leqslant j+k$.

  Further, for the case\/ $j=0$, the vanishing condition can be
  improved to\/
  $\mu \bigl( \big\{ \ell/\nts \sqrt{n} \ts \big\} \bigr)=0$ for all\/
  $\lvert  \ell \rvert \leqslant k$.
\end{coro}

\begin{proof}
  The first claim is clear.  Now, choosing $j=0$ gives the existence
  of an eigenmeasure $\mu$ that satisfies
  $\ts \widehat{\mu} = \lambda \mu$ together with
  $\supp (\mu) \subset \ZZ/\sqrt{n}$ and
  $\mu \bigl( \big\{ m/\nts \sqrt{n} \ts \big\} \bigr)=0$ for all
  $0 \leqslant m \leqslant k$.

  Finally, since $\mu$ is an eigenmeasure, Fact~\ref{fact:symm} gives
\[
  \mu \bigl( \big\{ -m/\nts \sqrt{n} \ts \big\} \bigr)
  \, = \,  \lambda^2 \mu \bigl( \big\{
  m/\nts \sqrt{n} \ts \big\} \bigr) \, = \, 0 \ts ,
\]
  which holds for all $0 \leqslant m \leqslant k$.
\end{proof}

This shows that doubly sparse measures with large gaps around $0$
exist that are Fourier eigenmeasures.  For $12\leqslant n\in\NN$,
setting $k=\lfloor \frac{n}{4} \rfloor-3$, such measures $\mu$ vanish
on the open interval\/
$\big(-\frac{k+1}{\sqrt{n}} , \frac{k+1}{\sqrt{n}}\big)$, so $\mu$ has
a gap of length\/ $\frac{k+1}{\sqrt{n}}$. Since
\[
  \frac{\sqrt{n}}{4} \, \geqslant \,
  \frac{\lfloor \frac{n}{4} \rfloor}{\sqrt{n}}
  \, \geqslant \,  \frac{\lfloor \frac{n}{4}
     \rfloor -2 }{\sqrt{n}} \, = \,
  \frac{k+1}{\sqrt{n}}
   \, \geqslant \, \frac{\frac{n}{4} -3}{\sqrt{n}}
    \, = \, \frac{\sqrt{n}}{4} - \frac{3}{\sqrt{n}}  \ts ,
\]
the total length of the gap grows like\/ $\frac{\sqrt{n}}{2}$ as
$n \to\infty$.

Let us quickly look at an example for
$\lambda \in \{\pm1,\pm \ts \ii\}$ arbitrary, but fixed. For every
sufficiently large\/ $n\in\NN$, select a non-zero measure
\begin{equation}\label{eq:mu-n}
  \mu^{}_n \, = \, \delta_{\nts\sqrt{n}\ts\ZZ}  * \!
  \sum_{m=0}^{n-1} c^{(n)}_{m} \, \delta^{}_{m/\nts\sqrt{n}}
\end{equation}
where $c^{(n)}_{m}\leqslant 1$ together with
$c^{(n)}_0 = c^{(n)}_1 = \ldots =c^{(n)}_k = 0$ and\/
$c_{n-k} =\ldots=c_{n-1}=0$ constitute an eigenvector of the
corresponding DFT. Such a measure exists by Corollary~\ref{coro:gap}
when $k=\lfloor \frac{n}{4} \rfloor - 3$, and is translation bounded
by construction. Next, such measures can be combined as follows.

\begin{theorem}\label{thm:Meyer1}
  Let\/ $(k^{}_n)^{}_{n\in\NN}$ be a sequence in\/ $\NN$ with\/
  $\lim_n k_n =\infty$. Let\/ $r\in \NN$ be fixed and let\/
  $(a^{}_n)^{}_{ \{n\geqslant 12 \} }$ be a sequence in\/
  $\CC$ such that\/ $|a^{}_n| \leqslant (k_n^{})^r$ holds for all
  sufficiently large\/ $ n \in \NN$.  Then, with the measures\/
  $\mu_n$ from \eqref{eq:mu-n},
\[
    \mu \, \defeq \sum_{n=12}^{\infty} a^{}_n \ts \mu^{}_{k_n}
\]
  is a Radon measure with locally finite support that is tempered
  and satisfies\/ $\widehat{\mu} = \lambda \mu$.
\end{theorem}

\begin{proof}
  Since each $\mu^{}_{k_n}$ vanishes on a gap around $0$ that
  asymptotically grows like $\frac{\sqrt{k_n}}{2}$, the evaluation of
  $\mu$ on a test function of compact support effectively means the
  evaluation of a finite sum.  This shows vague convergence.

  When the test function is from the Schwartz class, the growth
  restriction on the $a_n$ guarantees that the sum also converges in
  the topology of Schwartz space, so $\mu$ is a tempered distribution,
  and thus a tempered measure.

  Since transformability of $\mu$ as a tempered distribution is clear,
  which bypasses the discontinuity issue discussed in \cite{SS}, the
  eigenmeasure property follows by construction.
\end{proof}

This result constructively shows that lots of Fourier eigenmeasures
with locally finite support exist that fail to be translation bounded,
and also fail to be uniformly discrete. Let us formulate two obvious
consequences as follows.

\begin{coro}\label{coro:ev-gap}
  Let\/ $12\leqslant n\in\NN$ and\/
  $\lambda\in \{ \pm 1 , \pm \ts \ii \}$ be fixed. Select some\/
  $0 \leqslant j \leqslant n-k-1$, where\/
  $1 \leqslant k \leqslant \frac{n}{4} -3$. Then, there exists a
  non-zero vector\/
  $c = (c^{}_{0}, c^{}_{1}, \ldots , c^{}_{n-1} )^T \in
  E^{}_{\lambda}$ such that\/ $c_{j+\ell}=0$ holds for all\/
  $\ts 0 \leqslant \ell \leqslant k$.

  In particular, for\/ $j=0$, the symmetry constraints then give\/
  $c^{}_{0} = c^{}_1= \ldots= c^{}_k=0$ together with\/
  $c^{}_{n-1}= c^{}_{n-2}=\ldots= c^{}_{n-k} = 0$.  \qed
\end{coro}

Let us briefly discuss how one can explicitly construct such an
eigenvector, and hence an eigenmeasure as in the second claim of
Corollary~\ref{coro:gap}.

\begin{remark}
  Let $n \geqslant 12$ and let $\{v^{}_1, \ldots, v^{}_m \}$ be a
  basis for the eigenspace $E_{\lambda}$. Let $A$ be the matrix with
  vectors $v_1^T,\ldots, v_m^T$ as rows and
  $c^T = (c^{}_0, \ldots, c^{}_n)$ be the last row in the reduced row
  echelon form of $A$.

  Then, $c\in E_\lambda$ with $c^{}_0 = \ldots c^{}_{m-2}=0$, and
  Remark~\ref{rem:symm} gives $c^{}_{n-1}= \ldots = c^{}_{n-m+2}=0$.
  Consequently, the eigenvector $c$ satisfies the second
  conclusion of Corollary~\ref{coro:ev-gap}.  Then, the corresponding
  eigenmeasure given by \eqref{eq:mu-DFT} is an eigenmeasure with a
  large gap at $0$.\exend
 \end{remark}

\section{Outlook}
The result of Theorem~\ref{thm:sparse} implies that the
characterisation of all doubly sparse measures, an important open
problem in the area of Aperiodic Order as well as in Harmonic
Analysis, is equivalent to the characterisation of all doubly sparse
eigenmeasures. This is an interesting problem which is made difficult
by the fact that, besides linear combination of eigenmeasures with
uniformly discrete support, this class contains many other examples
\cite{Meyer,Meyer-17,Kol}, for instance with a support of the form
$\{ \pm \sqrt{m + 1/r} : m \in \NN_0 \}$ with $r\in\NN$, which is not
uniformly discrete. Two particularly nice ones derive from the work of
Guinand \cite{Gui}.

Note that all measures in Theorem~\ref{thm:Meyer1} are supported
inside $\bigcup_{n} \ZZ/\nts\sqrt{n}$. One can go beyond this
situation by selecting any bounded sequence $(s^{}_n)^{}_{n\in\NN}$
and defining
\[
    \mu := P_{\lambda} \biggl( \sum_{n}
    a^{}_n \ts \delta^{}_{s_n} \!* \mu^{}_{k_n} \biggr)  .
\]
It is then natural to ask whether the measure in \cite{Meyer}, or
even any Fourier eigenmeasure with locally finite support, can be
obtained this way.

Another interesting question, in a rather different direction, is to
what extent, if any, our results can be used in the spectral theory of
dynamical systems. While the equivalence of pure point dynamical
spectrum and pure point diffraction spectrum is well understood, the
full meaning of the eigenmeasure property seems open.

\medskip

\section*{Appendix: Discrete Fourier transform}

Here, we recall some well-known properties of the DFT, {where we refer
  the reader to the rather informative \textsc{WikipediA} entry on the
  DFT for further details and references.} Given $n\in\NN$, the
unitary Fourier matrix $U_n \in \Mat (n,\CC)$ has matrix entries
$\omega^{k\ell}/\sqrt{n}$ with $\omega = \ee^{-2 \pi \ii/n}$ and
\mbox{$0 \leqslant k,\ell \leqslant n-1$}, where we label the entries
starting with $0$.  Since $n=1$ is trivial, we only consider
$n\geqslant 2$. While $U^{}_{2}$ is an involution, $U_n$ has order $4$
for all $n \geqslant 3$, which follows from the relation
\[
    U^{2}_{n} \, = \, \left( \begin{array}{c|ccc}
    1 & 0 & \cdots & 0  \\ \hline 0 & \boldsymbol{0} & & 1 \\
    \raisebox{2pt}{$\vdots$} & & \iddots & \\
    0 & 1 & & \boldsymbol{0} \end{array} \right) .
\]
The eigenvalues and their multiplicities show a structure modulo $4$,
as already known to Gauss. They are summarised in
Table~\ref{tab:eigen}.

\begin{table}
\begin{tabular}{|c|cccc|}
  \hline
  $n$ & $\lambda=1$ & $\lambda =\ii$
  & $\lambda=-1$ & $\lambda=-\ii$ \\
  \hline
  $4m$   & $m+1$ & $m-1$ & $m$   & $m$ \\
  $4m+1$ & $m+1$ & $m$   & $m$   & $m$ \\
  $4m+2$ & $m+1$ & $m$   & $m+1$ & $m$ \\
  $4m+3$ & $m+1$ & $m$   & $m+1$ & $m+1$ \\
  \hline
\end{tabular}\bigskip
\caption{Eigenvalues of $U_n$ and their
    multiplicities.\label{tab:eigen}}
\end{table}

In line with Proposition~\ref{prop:plambda}, for $\lambda \in
\{ \pm  1,  \pm  \ts \ii \}$, we define
\[
     P_{\lambda}^{(n)} \, \defeq \,\myfrac{1}{4}
     \sum_{\ell=0}^{3} \lambda^{- \ell} U^{\ell}_{n} \ts ,
\]
which are projectors with
$P^{(n)}_{\lambda} P^{(n)}_{\lambda'} = \delta^{}_{\lambda, \lambda'}
P^{(n)}_{\lambda}$.  They also satisfy
$U^{}_{n} P^{(n)}_{\lambda} = P^{(n)}_{\lambda} U^{}_{n} = \lambda
P^{(n)}_{\lambda}$ together with
$P^{(n)}_{1} + P^{(n)}_{\ii}+P^{(n)}_{-1}+ P^{(n)}_{-\ii} =
\one^{}_{n}$. In particular, the eigenspace of $U_n$ for the
eigenvalue $\lambda$ is then simply
$E^{}_{\lambda} = P^{(n)}_{\lambda} \CC^n$, and the rank of
$P^{(n)}_{\lambda}$ is the dimension of the eigenspace, as given by
Table~\ref{tab:eigen}.

Concretely, we have
\[
  U^{}_2 \, = \, \myfrac{1}{\sqrt{2}}
  \begin{pmatrix} 1 & 1 \\ 1 & -1 \end{pmatrix}
\]
with eigenvalues $\pm 1$ and eigenvectors $(1\pm \sqrt{2}, 1)$.  Since
the DFT matrices are symmetric, left and right eigenvectors map to
each other under transposition. We state them in row form.  The
projectors read
\[
    P^{(2)}_{\pm \ts 1} \, = \, \myfrac{1}{2} \bigl( \one^{}_{2} \pm
    U^{}_{2} \bigr) \, = \, \myfrac{1}{2\sqrt{2}} \begin{pmatrix}
    \sqrt{2} \pm \ts 1 & \pm \ts 1 \\ \pm \ts 1 & \sqrt{2} \mp \ts 1
    \end{pmatrix} \quad \text{and} \quad
    P^{(2)}_{\pm \ii} \, = \, \nix^{}_{2}  \ts ,
\]
where the first rows of the symmetric projectors $P^{(2)}_{\pm \ts 1}$
correspond to the above eigenvectors.

For $n=3$, setting $\omega = \ee^{-2 \pi \ii/3}$, one finds
\[
  U^{}_3 \, = \, \myfrac{1}{\sqrt{3}}
  \begin{pmatrix} 1 & 1 & 1\\
    1 & \omega & \overline{\omega} \\
    1 & \overline{\omega} & \omega \end{pmatrix} ,
\]
with eigenvectors $(1 \pm \sqrt{3}, 1, 1)$ for $\lambda = \pm 1$ and
$(0,-1,1)$ for $\lambda=-\ii$. Here, we have
$P^{(3)}_{\ii} = \nix^{}_{3}$ together with
\[
    P^{(3)}_{\pm \ts 1} \, = \, \myfrac{1}{4} \begin{pmatrix}
      \frac{2 \bigl( 3 \pm \sqrt{3}\, \bigr)}{3} &
      \pm \frac{2}{\sqrt{3}} &
    \pm \frac{2}{\sqrt{3}_{\vphantom{I}}} \\
    \pm \frac{2}{\sqrt{3}_{\vphantom{I}}} &
    \frac{3\mp\sqrt{3}}{3} &  \frac{3\mp\sqrt{3}^{\vphantom{I}}}{3} \\
     \pm \frac{2}{\sqrt{3}} & \frac{3\mp\sqrt{3}^{\vphantom{I}}}{3}  &
     \frac{3\mp\sqrt{3}}{3}  \end{pmatrix} \quad \text{and}
     \quad P^{(3)}_{-\ii} \, = \, \myfrac{1}{2} \begin{pmatrix}
     0 & 0 & 0 \\ 0 & 1 & -1 \\ 0 & -1 & 1 \end{pmatrix}.
\]

For $n=4$, one has
\[
  U^{}_4 \, = \, \myfrac{1}{2}
  \begin{pmatrix} 1 & 1 & 1 & 1 \\
    1 & - \ii & -1 & \ii  \\
    1 & -1 & 1 & -1 \\
    1 & \ii & -1 & -\ii \end{pmatrix} ,
\]
where the eigenspace for $\lambda=1$ is two-dimensional, spanned by
$(1,0,1,0)$ and $(2,1,0,1)$. The other eigenvectors are $(-1,1,1,1)$
for $\lambda = -1$ and $(0,-1,0,1)$ for $\lambda = -\ii$. Here,
  the projectors are $P^{(4)}_{\ii} = \nix^{}_{4}$ together with
\[
   P^{(4)}_{\pm \ts 1} \, = \, \myfrac{1}{4} \begin{pmatrix}
   2 \pm\ts 1 & \pm \ts 1 & \pm \ts 1 & \pm \ts 1 \\
   \pm \ts 1 & 1 & \mp \ts 1 & 1 \\
   \pm \ts  1 & \mp \ts 1 & 2 \pm \ts 1 & \mp \ts 1 \\
   \pm \ts 1 & 1 & \mp \ts 1 & 1 \end{pmatrix} \quad \text{and} \quad
   P^{(4)}_{-\ii} \, = \, \myfrac{1}{2} \begin{pmatrix}
   0 & 0 & 0 & 0 \\ 0 & 1 & 0 & -1 \\ 0 & 0 & 0 & 0 \\
   0 & -1 & 0 & 1 \end{pmatrix} ,
\]
where the rank of $P^{(4)}_{1}$ is two, as it must.

Next, $n=5$ is the first case where all possible eigenvalues occur.
With $\omega = \ee^{-2 \pi \ii/5}$, we get
\[
  U^{}_5 \, = \, \myfrac{1}{\sqrt{5}}
  \begin{pmatrix} 1 & 1 & 1 & 1 & 1 \\
    1 & \omega & \omega^2 & \overline{\omega}^2 & \overline{\omega} \\
    1 & \omega^2 & \overline{\omega} & \omega & \overline{\omega}^2 \\
    1 & \overline{\omega}^2 & \omega & \overline{\omega} & \omega^2 \\
    1 & \overline{\omega} & \overline{\omega}^2 & \omega^2 & \omega
  \end{pmatrix} ,
\]
where $(\tau,1,0,0,1)$ and $(\tau,0,1,1,0)$ span the eigenspace of
$\lambda=1$, with $\tau = \frac{1}{2} (1+\sqrt{5}\,)$ being the golden
ratio. The other eigenvectors are
$(0,-1,\tau\ts {\pm} \ts\eta,-(\tau \ts {\pm} \ts \eta),1)$ with
$\eta = \sqrt{\tau {+} 2 \ts}$ for $\lambda = \pm \ts \ii$, and
$(-\frac{2}{\tau},1,1,1,1)$ for $\lambda = -1$. Here, the projectors
are
\[
  P^{(5)}_{\pm 1} \, = \, \myfrac{1}{20} \begin{pmatrix}
   10 \pm (4 \tau {-} 2) & \pm (4 \tau {-} 2) &
   \pm ( 4 \tau {-} 2) & \pm ( 4 \tau {-} 2) &
   \pm ( 4 \tau {-} 2)  \\
   \pm ( 4 \tau {-} 2) & 5 \pm (3 {-} \tau) & \mp (\tau {+} 2) &
   \mp (\tau {+} 2) & 5 \pm (3 {-} \tau) \\
   \pm ( 4 \tau {-} 2) & \mp (\tau {+} 2) & 5 \pm (3 {-} \tau) &
   5 \pm (3 {-} \tau) & \mp (\tau {+} 2) \\
   \pm ( 4 \tau {-} 2) & \mp (\tau {+} 2) & 5 \pm (3 {-} \tau) &
   5 \pm (3 {-} \tau) & \mp (\tau {+} 2) \\
   \pm ( 4 \tau {-} 2) & 5 \pm (3 {-} \tau) & \mp (\tau {+} 2) &
   \mp (\tau {+} 2) & 5 \pm (3 {-} \tau) \end{pmatrix}
\]
and
\[
  P^{(5)}_{\pm\ts\ii} \, = \, \myfrac{1}{40} \begin{pmatrix}
    0 & 0 & 0 & 0 & 0 \\
    0 & 10 \mp \nts \sqrt{20}\, \eta & \mp \nts \sqrt{60{-}20\ts\tau} &
    \pm \nts \sqrt{60{-}20\ts\tau} & -10 \pm \nts \sqrt{20} \, \eta \\
    0 & \mp \nts \sqrt{60{-}20\ts\tau} & 10 \pm \nts \sqrt{20} \, \eta &
    -10 \mp \nts \sqrt{20} \, \eta & \pm \nts \sqrt{60{-}20\ts\tau} \\
    0 & \pm \nts \sqrt{60{-}20\ts\tau} & -10 \mp \nts \sqrt{20} \, \eta &
    10 \pm \nts \sqrt{20} \, \eta &  \mp \nts \sqrt{60{-}20\ts\tau} \\
    0 & -10 \pm \nts \sqrt{20} \, \eta & \pm \nts \sqrt{60{-}20\ts\tau} &
    \mp \nts \sqrt{60{-}20\ts\tau} & 10 \mp \nts \sqrt{20} \, \eta
    \end{pmatrix} ,
\]
where the rank of $P^{(5)}_{1}$ is two, while all others have rank one.

As a last case, let us consider $n=6$. Here, with $\omega =
\ee^{-\pi \ii/3}$, we have
\[
   U^{}_{6} \, = \, \myfrac{1}{\sqrt{6}}
  \begin{pmatrix} 1 & 1 & 1 & 1 & 1 & 1 \\
    1 & \omega & \omega^2 & -1 &
        \overline{\omega}^2 & \overline{\omega} \\
    1 & \omega^2 & \overline{\omega}^2 & 1 &
        \omega^2 & \overline{\omega}^2 \\
    1 & -1 & 1 & -1 & 1 & -1 \\
    1 & \overline{\omega}^2 & \omega^2 & 1&
        \overline{\omega}^2 & \omega^2 \\
    1 & \overline{\omega} & \overline{\omega}^2 &
        -1 & \omega^2 & \omega
  \end{pmatrix} .
\]
The eigenspace for $\lambda=1$ is two-dimensional, spanned by
$(\beta,1,0,1-\beta,0,1)$ together with $(1+\beta,0,1,\beta,1,0)$, as
is that for $\lambda = -1$, which is spanned by
$(-\beta,1,0,1+\beta,0,1)$ and $(1-\beta,0,1,-\beta,1,0)$, with
$\beta = \sqrt{3/2}$ in both cases. The remaining eigenvectors are
given by $(0,-1,1\pm\sqrt{2},0, -(1\pm\sqrt{2}\,),1)$ for
$\lambda = \pm \ts \ii$, respectively. The projectors read
\[
  P^{(6)}_{\pm 1} \, = \, \myfrac{1}{4 \sqrt{6}} \begin{pmatrix}
    2 ( \sqrt{6} \pm \ts 1) & \pm \ts 2 & \pm \ts 2 & \pm \ts 2
    & \pm \ts 2 & \pm \ts 2 \\ \pm \ts 2 & \sqrt{6} \pm \ts 1 &
    \mp \ts 1 & \mp \ts 2 & \mp \ts 1 & \sqrt{6} \pm \ts 1 \\
    \pm \ts 2 & \mp \ts 1 & \sqrt{6} \mp \ts 1 & \pm \ts 2 &
    \sqrt{6} \mp \ts 1 & \mp \ts 1 \\ \pm \ts 2 & \mp \ts 2 &
    \pm \ts 2 & 2(\sqrt{6} \mp \ts 1) & \pm \ts 2 & \mp \ts 2 \\
    \pm \ts 2 & \mp \ts 1 & \sqrt{6} \mp \ts 1 & \pm \ts 2 &
    \sqrt{6} \mp \ts 1 & \mp \ts 1 \\ \pm \ts 2 &
    \sqrt{6} \pm \ts 1 & \mp \ts 1 & \mp \ts 2 & \mp \ts 1 &
    \sqrt{6} \pm \ts 1 \end{pmatrix}
\]
and
\[
 P^{(6)}_{\pm \ts \ii} \, = \, \myfrac{1}{8} \begin{pmatrix}
    0 & 0 & 0 & 0 & 0 & 0 \\
    0 & 2 \mp \sqrt{2} & \mp \sqrt{2} &
    0 & \pm \sqrt{2} & - 2 \pm \sqrt{2} \\
    0 & \mp \sqrt{2} & 2 \pm \sqrt{2} &
    0 & -2 \mp \sqrt{2} & \pm \sqrt{2} \\
    0 & 0 & 0 & 0 & 0 & 0 \\
    0 & \pm \sqrt{2} & -2 \mp \sqrt{2} &
    0 & 2 \pm \sqrt{2} & \mp \sqrt{2} \\
    0 & -2 \pm \sqrt{2}& \pm \sqrt{2} &
    0 & \mp \sqrt{2} & 2 \mp \sqrt{2} \end{pmatrix} .
\]
Here, $P^{(6)}_{\pm 1}$ have rank two and $P^{(6)}_{\pm\ts\ii}$
are of rank one.

\section*{Acknowledgements}

We are grateful to {Hans Georg Feichtinger}, Sebastian Herr, Yves
Meyer, Adi Tcaciuc and Venta Terauds for valuable discussions and
helpful comments on the manuscript. We thank the referees for their
constructive comments which helped us to improve the presentation. TS
would like to thank the Department of Mathematical and Statistical
Sciences of the University of Alberta (Edmonton, Canada) for
hospitality during his stay as a research fellow of the DFG.  This
work was supported by the German Research Foundation (DFG, Deutsche
Forschungsgemeinschaft), within the CRC 1283/2 \mbox{(2021 -
  317210226)} at Bielefeld University (MB) and via research grant
415818660 (TS), and by the Natural Sciences and Engineering Council of
Canada (NSERC), via grant 2020-00038 (NS).

\end{document}